\newtheorem*{theoA}{Theorem A}
\newtheorem*{theoB}{Theorem B}
\newtheorem*{theoC}{Theorem C}
\newtheorem*{theoD}{Theorem D}
\newtheorem*{theoE}{Theorem E}
\newtheorem*{theoF}{Theorem F}
\newtheorem*{theoG}{Theorem G}
\newtheorem*{theoH}{Theorem H}
\newtheorem{theo}{Theorem}[section]
\newtheorem{lem}{Lemma}[section]
\newtheorem{exm}{Example}[section]
\newtheorem{defi}{Definition}[section]
\newtheorem{rem}{Remark}[section]
\newcommand{\ol}{\overline}
\newcommand{\be}{\begin{equation}}
\newcommand{\ee}{\end{equation}}
\newcommand{\beas}{\begin{eqnarray*}}
\newcommand{\eeas}{\end{eqnarray*}}
\newcommand{\bea}{\begin{eqnarray}}
\newcommand{\eea}{\end{eqnarray}}
\newcommand{\lra}{\longrightarrow}
\numberwithin{equation}{section}
\renewcommand{\vline}{\mid}
\begin{document}
\title[Further Results On Uniqueness Of Derivatives Of Meromorphic Functions ]{Further Results On Uniqueness Of Derivatives Of Meromorphic Functions Sharing Three Sets  }
\numberwithin {equation}{section}
\date{}
\author[A.Banerjee, S.Majumder and B. Chakraborty]{Abhijit Banerjee$^{1}$, Sujoy Majumder$^{2}$ and Bikash Chakraborty$^{3}$}
\address{ $^{1}$  Department of Mathematics, University of Kalyani, West Bengal, India.}
\email{abanerjee\_kal@yahoo.co.in, abanerjee\_kal@rediffmail.com}
\address{$^{2}$ Department of Mathematics, Katwa College, Burdwan, India.}
\address{$^{2}$ Department of Mathematics, Raiganj University, Raiganj, India}
\email{sujoy.katwa@gmail.com, smajumder05@yahoo.in}
\address{ $^{3}$  Department of Mathematics, University of Kalyani, West Bengal, India.}
\address{ $^{3}$ Department of Mathematics, Ramakrishna Mission Vivekananda Centenary College, Rahara, India}
\email{bikashchakraborty.math@yahoo.com, bikashchakrabortyy@gmail.com}
\maketitle
\let\thefootnote\relax
\footnotetext{2000 Mathematics Subject Classification: 30D35.}
\footnotetext{Key words and phrases: Meromorphic functions, uniqueness, weighted sharing, derivative, shared set.}
\footnotetext{Type set by \AmS -\LaTeX}
\begin{abstract} We prove some uniqueness theorems concerning the derivatives of meromorphic functions when they share three sets which will improve some recent existing results.\end{abstract}
\section{Introduction, Definitions and Results}
In this paper by meromorphic functions we will always mean meromorphic functions in the complex plane.
We shall use the standard notations of value distribution theory :
$$T(r,f),\;\; m(r,f),\;\; N(r,\infty;f),\;\; \ol N(r,\infty;f),\ldots $$ (see \cite{4}).
It will be convenient to let $E$ denote any set of positive real numbers of finite linear measure, not necessarily the same at each occurrence. We denote by $T(r)$ the maximum of $T(r,f^{(k)})$ and $T(r,g^{(k)})$. The notation $S(r)$ denotes any quantity satisfying $S(r)=o(T(r))$ as $r\lra \infty, r\not\in E$.\par

If for some $a\in\mathbb{C}\cup\{\infty\}$, $f$ and $g$ have the same set of $a$-points with same multiplicities then we say that $f$ and $g$ share the value $a$ CM (counting multiplicities). If we do not take the multiplicities into account, $f$ and $g$ are said to share the value $a$ IM (ignoring multiplicities).

Let $S$ be a set of distinct elements of $\mathbb{C}\cup\{\infty\}$ and $E_{f}(S)=\bigcup_{a\in S}\{z: f(z)-a=0\}$, where each zero is counted according to its multiplicity. If we do not count the multiplicity the set $E_{f}(S)=\bigcup_{a\in S}\{z: f(z)-a=0\}$ is denoted by $\ol E_{f}(S)$.
If $E_{f}(S)=E_{g}(S)$ we say that $f$ and $g$ share the set $S$ CM. On the other hand if $\ol E_{f}(S)=\ol E_{g}(S)$, we say that $f$ and $g$ share the set $S$ IM. Evidently if $S$ contains only one element, then it coincides with the usual definition of CM(respectively, IM) shared values.\par
In 1926, R.Nevanlinna showed that a meromorphic function on the complex plane $\mathbb{C}$ is uniquely determined by the pre-images, ignoring multiplicities, of $5$ distinct values (including infinity). A few years latter, he showed that when multiplicities are taken into consideration, $4$ points are enough and in this case either the two functions coincides or one is the bilinear transformation of the other. \par
This two theories are the starting point of uniqueness theory. Research became more interesting although sophisticated when F.Gross and C.C.Yang transferred the study of uniqueness theory to a more general setup namely sets of distinct elements instead of values. For instance they proved that if $f$ and $g$ are two non-constant entire functions and $S_{1}$, $S_{2}$ and $S_{3}$ are three distinct finite sets such that $f^{-1}(S_{i})=g^{-1}(S_{i})$ for $i=1,2,3$ then $f\equiv g$.

The following analogous question corresponding to meromorphic functions was asked in \cite{20}.\\
{\bf Question A} {\it Can one find three finite sets $S_{j}$ $(j=1,2,3)$ such that any two non-constant meromorphic functions $f$ and $g$ satisfying $E_{f}(S_{j})=E_{g}(S_{j})$ for $j=1,2,3$  must be identical ?}

{\it Question A} may be considered as the inception of a new horizon in the uniqueness of meromorphic functions concerning three set sharing problem and so far the quest for affirmative answer to {\it Question A} under weaker hypothesis has made a great stride \{see \cite{1}-\cite{1a}, \cite{2b}-\cite{2c}, \cite{4}, \cite{13}, \cite{16}, \cite{18}-\cite{21}, \cite{25}\}.
But unfortunately the derivative counterparts of the above results are scanty in number.
In 2003, in the direction of {\it Question A} concerning the uniqueness of derivatives of meromorphic functions Qiu and Fang obtained the following result.
\begin{theoA}\cite{18} Let $S_{1}=\{z:z^{n}-z^{n-1}-1=0\}$, $S_{2}=\{\infty\}$ and $S_{3}=\{0\}$ and $n\;(\geq 3)$, $k$ be two positive integers. Let $f$ and $g$ be two non-constant meromorphic functions  such that $E_{f^{(k)}}(S_{j})=E_{g^{(k)}}(S_{j})$ for $j=1,3$ and $E_{f}(S_{2})=E_{g}(S_{2})$ then $f^{(k)}\equiv g^{(k)}$.\end{theoA}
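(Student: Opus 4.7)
\medskip
\textbf{Proof proposal.} I would set $F:=f^{(k)}$, $G:=g^{(k)}$, and $P(w):=w^{n}-w^{n-1}-1$, and first translate each of the three hypotheses into a CM-sharing statement between $F$ and $G$. Directly, $E_{f^{(k)}}(S_{3})=E_{g^{(k)}}(S_{3})$ says that $F$ and $G$ share $0$ CM. A pole of $f$ of order $p$ is exactly a pole of $F$ of order $p+k$ (and likewise for $g$), so $E_{f}(S_{2})=E_{g}(S_{2})$ upgrades to $F$ and $G$ sharing $\infty$ CM. Finally $E_{f^{(k)}}(S_{1})=E_{g^{(k)}}(S_{1})$ amounts to $P(F)$ and $P(G)$ sharing $0$ CM. The algebraic fact that underlies the whole argument is that $P$ has $n$ \emph{simple} roots: $P'(w)=w^{n-2}\bigl(nw-(n-1)\bigr)$ has only the critical points $w=0$ and $w=(n-1)/n$, at neither of which $P$ vanishes (indeed $P(0)=-1$ and $P((n-1)/n)\ne 0$).

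From the CM-sharing of $0$ and $\infty$, $F/G$ is meromorphic without zeros or poles, hence $F=G\,e^{\beta}$ for some entire $\beta$. A pole of $F$ of order $m$ is a pole of $P(F)$ of order $nm$ (similarly for $G$), so $P(F)/P(G)$ is also free of zeros and poles and $P(F)=e^{\alpha}P(G)$ for some entire $\alpha$. Substituting $F=Ge^{\beta}$ into $P(F)=e^{\alpha}P(G)$ and rearranging yields the master identity
\begin{equation*}
G^{n-1}\Bigl[\,G\,(e^{n\beta}-e^{\alpha})-(e^{(n-1)\beta}-e^{\alpha})\,\Bigr]=1-e^{\alpha}.
\end{equation*}
If $e^{\alpha}\equiv 1$ the right-hand side vanishes; since $G\not\equiv 0$ one obtains $G(e^{n\beta}-1)=e^{(n-1)\beta}-1$, and either $e^{\beta}\equiv 1$ (so $F\equiv G$ and we are done) or $G=(e^{(n-1)\beta}-1)/(e^{n\beta}-1)$. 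In the second subcase the poles of $G$ sit at those $z$ where $e^{\beta(z)}$ equals one of the $n-1$ nontrivial $n$-th roots of unity, and a short residue computation shows that at each such $z$ with $\beta'(z)\ne 0$ the pole of $G$ is \emph{simple}; Picard's theorem provides infinitely many such simple poles, contradicting the fact that every pole of $G=g^{(k)}$ has multiplicity at least $k+1\ge 2$.

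The main obstacle is the remaining case $e^{\alpha}\not\equiv 1$. Here the master identity is a degree-$n$ polynomial relation in $G$ whose coefficients are entire functions in $e^{\alpha},e^{\beta}$ and whose right-hand side is the nonconstant entire function $1-e^{\alpha}$, and one has to extract a contradiction by Nevanlinna theory. My plan is first to compare the pole orders of $G$ on the two sides of the master identity; this forces $e^{n\beta}-e^{\alpha}$ to vanish to high order at every pole of $G$, and the infinitude of the pole set then propagates the vanishing to $e^{n\beta}\equiv e^{\alpha}$. Feeding this back into $P(F)=e^{\alpha}P(G)$, invoking the Mokhon'ko-type identity $T(r,P(F))=nT(r,F)+S(r,F)$, and applying the Second Fundamental Theorem to $G$ with targets $0$, $\infty$ and the zeros of the bracketed factor should deliver $T(r,e^{\alpha})=S(r,G)$; a final Borel-type argument then forces $e^{\alpha}\equiv 1$, returning the proof to the easy case and yielding $F\equiv G$, that is $f^{(k)}\equiv g^{(k)}$, as required.
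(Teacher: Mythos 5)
Your reduction is set up correctly: with $P(w)=w^{n}-w^{n-1}-1$ the three hypotheses do yield $f^{(k)}=g^{(k)}e^{\beta}$ and $P(f^{(k)})=e^{\alpha}P(g^{(k)})$ (the simplicity of the roots of $P$, which you verify, is exactly what turns the sharing of $S_{1}$ into a CM statement about $P(f^{(k)})$ and $P(g^{(k)})$), and the master identity is algebraically right. The subcase $e^{\alpha}\equiv 1$ can be closed along the lines you indicate: every pole of $g^{(k)}$ has multiplicity at least $k+1\geq 2$, whereas $(e^{(n-1)\beta}-1)/(e^{n\beta}-1)$ has a simple pole wherever $e^{\beta}$ takes a non-unit $n$-th root of unity simply; since $m(r,\beta')=m\bigl(r,(e^{\beta})'/e^{\beta}\bigr)=S(r,e^{\beta})$ the multiple such points are negligible, and the second fundamental theorem applied to $e^{\beta}$ (which omits $0$ and would have to take all $n-1\geq 2$ non-unit roots of unity only multiply) finishes that branch, with the constant-$\beta$ degeneracy handled separately.

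The genuine gap is the case $e^{\alpha}\not\equiv 1$, which you only sketch, and the sketch hinges on a step that fails: from the fact that $e^{n\beta}-e^{\alpha}$ vanishes to order at least $k+1$ at every pole of $g^{(k)}$ you cannot conclude $e^{n\beta}\equiv e^{\alpha}$ --- an entire function may have infinitely many zeros without vanishing identically, and the argument gives nothing at all if $g$ has few or no poles, a possibility the hypotheses do not exclude. What the pole comparison actually yields is only the counting inequality $(k+1)\,\overline N(r,\infty;g)\leq N(r,0;e^{n\beta}-e^{\alpha})\leq nT(r,e^{\beta})+T(r,e^{\alpha})+O(1)$, and converting this (together with Valiron--Mokhon'ko applied to the master identity and estimates relating $T(r,e^{\alpha})$, $T(r,e^{\beta})$ to $T(r,g^{(k)})$) into a contradiction is the substantive part of the Qiu--Fang and Yi--Lin proofs; it is not a routine afterthought. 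Note also that the paper never proves Theorem A by this route: it quotes it from \cite{18} and re-derives it as the CM special case of Theorem \ref{t1.1}, whose proof abandons the exponential factorization (unavailable under finite-weight sharing) in favour of the auxiliary functions $H$, $\Phi_{1}$, $\Phi_{2}$, $\Phi_{3}$ attached to $F=(f^{(k)})^{n-1}(f^{(k)}+a)/(-b)$ and the weighted-sharing counting lemmas of Section 2. Your approach is a legitimate, more classical alternative for the CM statement, but as written its main case is a plan rather than a proof.
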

In 2004 Yi and Lin \cite {25} independently proved the following theorem.
\begin{theoB}\cite{25}  Let $S_{1}=\{z:z^{n}+az^{n-1}+b=0\}$, $S_{2}=\{\infty\}$ and $S_{3}=\{0\}$, where $a$, $b$ are nonzero constants such that $z^{n}+az^{n-1}+b=0$ has no repeated root and $n\; (\geq 3)$, $k$ be two positive integers. Let $f$ and $g$ be two non-constant meromorphic functions  such that $E_{f^{(k)}}(S_{j})=E_{g^{(k)}}(S_{j})$ for $j=1,2,3$ then $f^{(k)}\equiv g^{(k)}$.\end{theoB}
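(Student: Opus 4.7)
Write $F := f^{(k)}$, $G := g^{(k)}$, and $P(z) := z^{n} + az^{n-1} + b$, so the three set-sharing hypotheses become: $F$ and $G$ share $0$ CM, $F$ and $G$ share $\infty$ CM, and $P(F)$ and $P(G)$ share $0$ CM. Because $F$ and $G$ have identical zero sets and pole sets with identical multiplicities, $F/G$ is a meromorphic function with neither zeros nor poles, so $F = e^{\alpha}G$ for some entire function $\alpha$. A parallel analysis of $P(F)/P(G)$ -- the CM-sharing of zero sets cancels numerator zeros against denominator zeros, and at common poles of $F$ and $G$ of order $m$ both $P(F)$ and $P(G)$ have poles of order $nm$ whose ratio extends to a finite nonzero value -- shows $P(F) = e^{\gamma}P(G)$ for some entire function $\gamma$. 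The goal is to deduce $e^{\alpha} \equiv 1$.

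Substituting $F = e^{\alpha}G$ into $P(F) = e^{\gamma}P(G)$ and rearranging gives the key functional identity
\[
G^{n-1}\bigl[(e^{n\alpha} - e^{\gamma})\,G + a(e^{(n-1)\alpha} - e^{\gamma})\bigr] = b(e^{\gamma} - 1),
\]
which I exploit in two stages. Stage 1: rule out $e^{\gamma}\not\equiv 1$. Its right-hand side is then a nonzero entire function, and at each pole of $G$ the left-hand side has a pole unless $e^{n\alpha} - e^{\gamma}$ vanishes there; combining this constraint with second-main-theorem estimates for $G$ and with Milloux-type inequalities tailored to the derivative structure $G = g^{(k)}$, one forces a growth incompatibility between $T(r,G)$, $T(r,e^{\alpha})$, and $T(r,e^{\gamma})$ that yields the desired contradiction. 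Stage 2: with $e^{\gamma}\equiv 1$ the identity collapses to $G^{n-1}\bigl[(e^{n\alpha} - 1)G + a(e^{(n-1)\alpha} - 1)\bigr] \equiv 0$. Since $G\not\equiv 0$, the bracket vanishes; if $e^{n\alpha}\equiv 1$ one deduces $e^{(n-1)\alpha}\equiv 1$ and, using $\gcd(n,n-1)=1$, concludes $e^{\alpha}\equiv 1$. Otherwise $G = -a(e^{(n-1)\alpha}-1)/(e^{n\alpha}-1)$ is a rational function of $e^{\alpha}$, and an explicit computation giving $F+a = a/(1+e^{\alpha}+\cdots+e^{(n-1)\alpha})$ shows that $-a$ is a Picard-exceptional value for both $F$ and $G$; a Nevanlinna argument at $\{0,-a,\infty\}$, refined by the derivative structure, forces $\alpha$ to be constant, and then the constant value must be a common root of $t^{n}=1$ and $t^{n-1}=1$, hence $e^{\alpha}\equiv 1$.

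The principal obstacle will be the Nevanlinna-style counting at the end of each stage: the vanilla second main theorem alone is not sharp enough here, so one must invoke refined inequalities specific to $k$-th derivatives (of Milloux/Hayman type), exploiting both the tight algebraic form dictated by the functional identity and the hypotheses $n \geq 3$ and distinctness of the roots of $P$. Once these comparisons are carried through, the conclusion $F \equiv G$, i.e., $f^{(k)} \equiv g^{(k)}$, follows.
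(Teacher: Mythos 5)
Your algebraic skeleton is sound, and it is essentially the classical CM-sharing route of Yi--Lin for this statement (which the present paper only cites; its own Theorem 1.1 subsumes it under finite-weight sharing, where your factorizations $F/G=e^{\alpha}$ and $P(F)/P(G)=e^{\gamma}$ are unavailable, forcing the authors into the auxiliary function $H$ and a long chain of weighted counting lemmas instead). The identity $G^{n-1}\bigl[(e^{n\alpha}-e^{\gamma})G+a(e^{(n-1)\alpha}-e^{\gamma})\bigr]=b(e^{\gamma}-1)$ is correctly derived, and your Stage 2 is essentially complete: the computation $F+a=a/(1+e^{\alpha}+\cdots+e^{(n-1)\alpha})$ checks out; since $G=g^{(k)}$ is a $k$-th derivative its poles have multiplicity at least $k+1$, so every $\zeta$-point of $h=e^{\alpha}$ with $\zeta^{n}=1$, $\zeta\neq 1$ has multiplicity at least $k+1$, and the second main theorem for $h$ with the $n+1$ targets $0,\infty,\zeta_{1},\ldots,\zeta_{n-1}$ gives $(n-1)T(r,h)\leq\frac{n-1}{k+1}T(r,h)+S(r,h)$, whence $h$ is constant. (You should still dispose of the degenerate outcome in which $G$ is then forced to be a nonzero constant.)

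The genuine gap is Stage 1. Ruling out $e^{\gamma}\not\equiv 1$ is the analytic heart of the theorem --- it is the analogue of the whole of Case 1 in the paper's proof of Theorem 1.1, which consumes essentially all of Lemmas 2.1--2.17 --- and your treatment of it is a single sentence asserting that ``second-main-theorem estimates'' and ``Milloux-type inequalities'' will ``force a growth incompatibility.'' No inequality is written down, no function is named to which the second main theorem is applied, the subcases $e^{\gamma}$ a non-unit constant versus $e^{\gamma}$ non-constant are not separated, and the hypothesis $n\geq 3$ never enters quantitatively; as it stands this is a statement of intent, not a proof. Moreover, the one concrete claim you do make in Stage 1 is not correct as stated: at a pole of $G$ of order $m$ the two terms $(e^{n\alpha}-e^{\gamma})G^{n}$ and $a(e^{(n-1)\alpha}-e^{\gamma})G^{n-1}$ can have cancelling principal parts, so the left-hand side can be regular there without $e^{n\alpha}-e^{\gamma}$ vanishing; what one actually obtains is only a joint vanishing condition on the two exponential differences, and converting that into usable counting-function estimates is precisely the work that remains. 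Until Stage 1 is carried out in detail, the proof is incomplete.
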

The following examples show that in {\it Theorems A, B} $a\not=0$ is necessary.
\begin{exm}\label{e1.1}\cite{1aa} Let $f(z) =e^{z}$ and $g(z)=(-1)^{k}e^{-z}$ and $S_{1}=\{z:z^{3}-1=0\}$, $S_{2}=\{\infty\}$, $S_{3}=\{0\}$. Since $f^{(k)}-\omega ^{l}=g^{(k)}-\omega ^{3-l}$, where $\omega =cos\frac{2\pi}{3}+isin\frac{2\pi}{3}$, $0\leq l\leq 2$, clearly $E_{f^{(k)}}(S_{j})=E_{g^{(k)}}(S_{j})$ for $j=1,2,3$ but $f^{(k)}\not\equiv g^{(k)}$. \end{exm}
We now consider the following examples which establish the sharpness of the lower bound of $n$ in {\it Theorems A, B}.
\begin{exm}\label{e1.2}\cite{1aa} Let $f(z)=\sqrt {\alpha +\beta}\;\sqrt {\alpha \beta} \; e^{z}$ and $g(z)=(-1)^{k}\sqrt {\alpha +\beta}\;\sqrt {\alpha \beta}\; e^{-z}$ and $S_{1}=\{\alpha +\beta, \alpha\beta\}$, $S_{2}=\{\infty\}$, $S_{3}=\{0\}$, where $\alpha +\beta=-a$ and $\alpha \beta=b$; $a$, $b$ are nonzero complex numbers. Clearly $E_{f^{(k)}}(S_{j})=E_{g^{(k)}}(S_{j})$ for $j=1,2,3$ but $f^{(k)}\not\equiv g^{(k)}$. \end{exm}
\begin{exm}\label{e1.3} Let $f=\alpha \sqrt{\beta }e^{z}$, $g=(-1)^{k}\beta  \sqrt{\alpha  }e^{-z}$, where $\alpha$ and $\beta$ be two non zero complex numbers such that $\sqrt{\frac{\alpha }{\beta }}\not=-1$. Let $S_{1}=\{\beta  \sqrt{\alpha},\alpha \sqrt{\beta}\}$, $S_{2}=\{\infty\}$, $S_{3}=\{0\}$. Clearly $E_{f^{(k)}}(S_{j})=E_{g^{(k)}}(S_{j})$ for $j=1,2,3$ but $f^{(k)}\not\equiv g^{(k)}$.
\end{exm}
\begin{exm}\label{e1.4} Let $f=\sqrt{2}e^{z}$, $g=(-1)^{k}\sqrt{2}e^{-z}$. Let $S_{1}=\{1+i,1-i\}$, $S_{2}=\{\infty\}$, $S_{3}=\{0\}$. Clearly $E_{f^{(k)}}(S_{j})=E_{g^{(k)}}(S_{j})$ for $j=1,2,3$ but $f^{(k)}\not\equiv g^{(k)}$.
\end{exm}

Above example assures the fact that in {\it Theorems A}-{\it B}, the cardinality of the set $S_{1}$ can not be further reduced. Rather on the basis of above examples one may concentrate to relax the nature of sharing the range sets.
For the purpose of relaxation of the nature of sharing the sets the notion of weighted sharing of values and sets which appeared in \cite{10, 11} has become very much effective. We now give the definition.

\begin{defi}\label{d1.1} \cite{10, 11} Let $k$ be a nonnegative integer or infinity. For $a\in\mathbb{C}\cup\{\infty\}$ we denote by $E_{k}(a;f)$ the set of all $a$-points of $f$, where an $a$-point of multiplicity $m$ is counted $m$ times if $m\leq k$ and $k+1$ times if $m>k$. If $E_{k}(a;f)=E_{k}(a;g)$, we say that $f$, $g$ share the value $a$ with weight $k$.\end{defi}
The definition implies that if $f$, $g$ share a value $a$ with weight $k$ then $z_{0}$ is an $a$-point of $f$ with multiplicity $m\;(\leq k)$ if and only if it is an $a$-point of $g$ with multiplicity $m\;(\leq k)$ and $z_{0}$ is an $a$-point of $f$ with multiplicity $m\;(>k)$ if and only if it is an $a$-point of $g$ with multiplicity $n\;(>k)$, where $m$ is not necessarily equal to $n$.

We write $f$, $g$ share $(a,k)$ to mean that $f,g$ share the value $a$ with weight $k$. Clearly if $f$, $g$ share $(a,k)$ then $f$, $g$ share $(a,p)$ for any integer $p$, $0\leq p<k$. Also we note that $f$, $g$ share a value $a$ IM or CM if and only if $f$, $g$ share $(a,0)$ or $(a,\infty)$ respectively.
\begin{defi}\label{d1.2}\cite{10} Let $S$ be a set of distinct elements of $\mathbb{C}\cup\{\infty\}$ and $k$ be a nonnegative integer or $\infty$. We denote by $E_{f}(S,k)$ the set $E_{f}(S, k)=\bigcup_{a\in S}E_{k}(a;f)$.

Clearly $E_{f}(S)=E_{f}(S,\infty)$ and $\ol E_{f}(S)=E_{f}(S,0)$.\end{defi}
In 2009 Banerjee and Bhattacharjee \cite{1b} subtly use the concept of weighted sharing of sets to improve {\it Theorems A} and {\it B} as follows :
\begin{theoC}\cite{1b} Let $S_{i}$, $i=1,2,3$ be defined as in {\em Theorem B} and $k$ be a positive integer. If $f$ and $g$ are two non-constant meromorphic functions such that $E_{f^{(k)}}(S_{1},4)=E_{g^{(k)}}(S_{1},4)$, $E_{f}(S_{2},\infty)=E_{g}(S_{2},\infty)$ and $E_{f^{(k)}}(S_{3},7)=E_{g^{(k)}}(S_{3},7)$ then $f^{(k)}\equiv g^{(k)}$. \end{theoC}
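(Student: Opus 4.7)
\emph{Proof plan.} The plan is to follow the standard weighted-sharing machinery. Set $F=f^{(k)}$, $G=g^{(k)}$ and $P(w)=w^{n}+aw^{n-1}+b$. The hypothesis $E_{f^{(k)}}(S_{1},4)=E_{g^{(k)}}(S_{1},4)$ translates into $P(F)$ and $P(G)$ sharing $(0,4)$; the hypothesis on $S_{2}$ gives $f,g$ sharing $\infty$ CM, so $F,G$ and hence $P(F),P(G)$ share $\infty$ CM (with multiplicities shifted by $k$); and the hypothesis on $S_{3}$ says $F,G$ share $(0,7)$. It is convenient to normalise
\[
\mathcal{F}=-\frac{F^{n-1}(F+a)}{b},\qquad \mathcal{G}=-\frac{G^{n-1}(G+a)}{b},
\]
so that $\mathcal{F},\mathcal{G}$ share $(1,4)$ and $\infty$ CM, while their zeros come with multiplicity $n-1$ from zeros of $F,G$ and multiplicity $1$ from zeros of $F+a,G+a$.

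Following the methodology of \cite{1b}, introduce the auxiliary function
\[
H=\left(\frac{\mathcal{F}''}{\mathcal{F}'}-\frac{2\mathcal{F}'}{\mathcal{F}-1}\right)-\left(\frac{\mathcal{G}''}{\mathcal{G}'}-\frac{2\mathcal{G}'}{\mathcal{G}-1}\right),
\]
and split into the cases $H\not\equiv 0$ and $H\equiv 0$. If $H\not\equiv 0$, known estimates show that the simple poles of $H$ come only from zeros of $\mathcal{F}-1$ of multiplicity $\geq 5$, zeros of $F,G$ of multiplicity $\geq 8$, zeros of $F+a,G+a$, and a small-function contribution; moreover the sharing $(1,4)$ makes every common simple $1$-point of $\mathcal{F},\mathcal{G}$ a zero of $H$. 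Applying the second main theorem to $\mathcal{F}$ and $\mathcal{G}$ at $\{0,1,\infty\}$, invoking the weighted-sharing bounds on the relevant $\overline{N}$ terms, and using Milloux-type bounds relating $T(r,f^{(k)})$ to $T(r,f)$ for the pole contributions, one arrives at a Nevanlinna inequality of the form
\[
(2n-c)\,(T(r,F)+T(r,G))\leq S(r),
\]
with an explicit constant $c$ tied to the weights $4,\infty,7$; this is inconsistent for $n\geq 3$. Hence $H\equiv 0$. Integrating twice yields a M\"obius relation
\[
\frac{1}{\mathcal{F}-1}=\frac{A}{\mathcal{G}-1}+B\qquad (A\in\mathbb{C}\setminus\{0\},\;B\in\mathbb{C}).
\]
A subcase analysis on $(A,B)$, using the CM sharing of $\infty$ (which limits the admissible poles of $\mathcal{F},\mathcal{G}$), the sharing $(0,7)$ of $F,G$, and the fact that $P(0)=b\neq 0$, excludes every configuration except $A=1,\,B=0$. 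Thus $\mathcal{F}\equiv\mathcal{G}$, i.e.\ $P(F)\equiv P(G)$; writing $P(w)-P(z)=(w-z)R(w,z)$ and observing that $R(F,G)\not\equiv 0$ whenever $F\not\equiv G$ (since $P$ has only simple roots and $a\neq 0$), one concludes $F\equiv G$, that is $f^{(k)}\equiv g^{(k)}$.

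The principal obstacle is the fine bookkeeping in the branch $H\not\equiv 0$: each Nevanlinna counting function must be matched with the precise bound provided by the corresponding weighted-sharing hypothesis, and the specific thresholds $4$ for $S_{1}$ and $7$ for $S_{3}$ are calibrated so that the resulting estimate is contradictory for all $n\geq 3$. A secondary delicate step is the exclusion of the subcase $B\neq 0$ in the M\"obius relation, where one must exploit the polynomial structure of $P$ (and in particular the hypothesis $a\neq 0$, without which Examples~\ref{e1.1}--\ref{e1.4} supply counterexamples) rather than rely on a generic-position argument.
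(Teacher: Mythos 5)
Your overall strategy --- the normalised functions of (\ref{e2.1}), the auxiliary function $H$, the dichotomy $H\not\equiv 0$ versus $H\equiv 0$, and the second main theorem in the first branch --- is the same machinery this paper uses (here Theorem C is not proved directly but deduced from Theorem \ref{t1.1}, whose proof in Section 3 runs exactly this scheme, sharpened by the extra auxiliary functions $\Phi_{1},\Phi_{2},\Phi_{3}$). In the branch $H\not\equiv 0$ your outline has the right shape but stops short of the decisive step: the whole content of the theorem is the verification that the constant you call $c$ is in fact small enough for the weights $4,\infty,7$ to force a contradiction for every $n\geq 3$, and that bookkeeping (the analogue of (\ref{e3.1})--(\ref{e3.3})) is not carried out. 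Also, the extra poles of $H$ arising from zeros of the derivative of the normalised function occur where $f^{(k)}=-a(n-1)/n$, i.e.\ at zeros of $nf^{(k)}+(n-1)a$, not at zeros of $f^{(k)}+a$ as you state.

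The genuine gap is in the $H\equiv 0$ endgame. Having reduced to $P(f^{(k)})\equiv P(g^{(k)})$, you assert that $R(F,G)\not\equiv 0$ whenever $F\not\equiv G$ ``since $P$ has only simple roots and $a\neq 0$''. This is false. For $n=3$ one has $R(w,z)=w^{2}+wz+z^{2}+a(w+z)$, and the conic $R=0$ passes through the origin, hence is rational: taking $F=-a(1+h)/(1+h+h^{2})$ and $G=-ah(1+h)/(1+h+h^{2})$ for any non-constant meromorphic $h$ gives $P(F)\equiv P(G)$ with $F\not\equiv G$. So $P(F)\equiv P(G)$ alone does not yield $F\equiv G$; one must invoke the sharing of $S_{3}=\{0\}$ by $f^{(k)},g^{(k)}$ and of $\infty$ by $f,g$ to exclude such parametrised solutions (typically by analysing the value distribution of $h=f^{(k)}/g^{(k)}$). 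That is precisely the content of the paper's Lemma \ref{l2.24}, quoted from \cite{1aa}, and it is the step your proposal replaces with an incorrect one-line justification. Until this step is supplied, the $H\equiv 0$ case --- and hence the proof --- is not closed.
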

\begin{theoD}\cite{1b} Let $S_{i}$, $i=1,2,3$ be defined as in {\em Theorem B} and $k$ be a positive integer. If $f$ and $g$ are two non-constant meromorphic functions such that $E_{f^{(k)}}(S_{1},5)=E_{g^{(k)}}(S_{1},5)$, $E_{f}(S_{2},\infty)=E_{g}(S_{2},\infty)$ and $E_{f^{(k)}}(S_{3},1)=E_{g^{(k)}}(S_{3},1)$ then $f^{(k)}\equiv g^{(k)}$. \end{theoD}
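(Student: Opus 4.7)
The plan is to set $F=f^{(k)}$, $G=g^{(k)}$, and $P(w)=w^{n}+aw^{n-1}+b$, so that the three hypotheses become (i) $P(F)$ and $P(G)$ share $(0,5)$; (ii) $F,G$ share $\infty$ CM (inherited from $f,g$ sharing $\infty$ CM); and (iii) $F,G$ share $(0,1)$. The goal is to prove $F\equiv G$. Following the Lahiri--Banerjee strategy that underlies {\it Theorems C} and {\it D}, I would introduce the auxiliary function
$$H=\left(\frac{(P(F))''}{(P(F))'}-\frac{2(P(F))'}{P(F)}\right)-\left(\frac{(P(G))''}{(P(G))'}-\frac{2(P(G))'}{P(G)}\right),$$
which satisfies $m(r,H)=S(r)$ by the logarithmic derivative lemma. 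Its poles are confined to: zeros of $F$ or $G$ that are not common with equal multiplicity (since $\{0\}$ is shared only with weight $1$); zeros of $P(F)$ and $P(G)$ of common multiplicity $\geq 2$ (which under weight-$5$ sharing all have matching multiplicities); and zeros of $(P(F))'$ or $(P(G))'$ that are not zeros of $P(F), P(G)$. The CM sharing of $\infty$ cancels any pole contribution from $F,G$ themselves.

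Assuming $H\not\equiv 0$, I would apply the second main theorem to $F$ with targets $\{0,\infty,\beta_{1},\ldots,\beta_{n}\}$, where the $\beta_{i}$ are the distinct roots of $P$, giving
$$(n-1)T(r,F)\leq \ol N(r,0;F)+\ol N(r,\infty;F)+\ol N(r,0;P(F))-N_{0}(r,0;F')+S(r,F),$$
and symmetrically for $G$. Decomposing $\ol N(r,0;P(F))$ into $N_{E}^{1)},\,N_{E}^{(2}$ and $\ol N_{L}$ parts coming from weight-$5$ sharing, replacing $N_{E}^{(2}(r,0;P(F))$ by the $H$-pole bound (at most $N(r,H)+S(r)$), summing with the twin inequality for $G$, and using the CM pole sharing would yield a strict inequality of the form $2(n-1)\,T(r)\leq \lambda\,T(r)+S(r)$ with $\lambda<2(n-1)$ whenever $n\geq 3$, forcing $H\equiv 0$.

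From $H\equiv 0$, two integrations give $1/P(F)=A/P(G)+B$ for constants $A\,(\neq 0)$ and $B$. A routine case split in $B$ and, when $B=0$, in $A$, using $n\geq 3$, $a,b\neq 0$, the CM sharing of $\infty$, and the weight-$1$ sharing of $0$, should eliminate every sub-case except $P(F)\equiv P(G)$; matching leading and subleading coefficients (where $a\neq 0$ is crucial to pin down the $w^{n-1}$ term) then delivers $F\equiv G$. The main obstacle I anticipate is the bookkeeping under the very weak weight $1$ on $S_{3}=\{0\}$: the $\ol N_{L}$-type terms for zeros of $F,G$ have to be bounded by $\tfrac{1}{2}\ol N(r,0;F)+S(r)$ and analogously for $G$, exploiting the structure $F=f^{(k)}$ (so that multiple zeros of $F$ arise from zeros of $f$ of order $>k$) together with the CM pole sharing; keeping the coefficient $\lambda$ strictly below $2(n-1)$ even at $n=3$ is the tight step that drives the whole argument.
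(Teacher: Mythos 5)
Your overall frame is consistent with the paper's: Theorem D is not proved separately here but is quoted from \cite{1b} and recovered as the special case $k_{1}=5$, $k_{2}=\infty$, $k_{3}=1$ of Theorem \ref{t1.1}, and your $P(F),P(G)$ and your $H$ coincide with the paper's $F,G$ of (\ref{e2.1}) and its $H$ after the normalization $F-1=-P(f^{(k)})/b$. But there is a genuine gap at the heart of your counting step. After the second main theorem and the absorption of the simple shared $1$-points into $N(r,H)$, the right-hand side still carries $\ol N(r,0;f^{(k)})+\ol N(r,\infty;f)$ (and the $g$-counterparts) plus a full $\ol N(r,-a\frac{n-1}{n};f^{(k)})+\ol N(r,-a\frac{n-1}{n};g^{(k)})\leq 2T(r)$ coming from the zeros of $(P(F))'$, $(P(G))'$ inside $N(r,H)$. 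At $n=3$ --- the only case with no slack, and one the theorem must cover since Examples \ref{e1.2}--\ref{e1.4} show $n\geq 3$ is sharp --- this gives roughly $3T(r,f^{(k)})\leq 2T(r)+\ol N(r,0;f^{(k)})+\ol N(r,\infty;f)+\cdots$, which is no contradiction unless $\ol N(r,0;f^{(k)})$ and $\ol N(r,\infty;f)$ are themselves forced to be $S(r)$ or very small multiples of $T(r)$. Neither CM pole sharing nor weight-$1$ sharing of $0$ does this by itself, and your proposal contains no mechanism for it. The paper's mechanism is the triple of auxiliary functions $\Phi_{1},\Phi_{2},\Phi_{3}$: Lemmas \ref{l2.6}, \ref{l2.8} and \ref{l2.10} turn them into a circular system of linear inequalities among $\ol N(r,1;F\mid\geq k_{1}+1)$, $\ol N(r,0;f^{(k)}\mid\geq k_{3}+1)$ and $\ol N(r,\infty;f\mid\geq k_{2}+1)$ (the factor $k_{2}+k$ in Lemma \ref{l2.10}, exploiting that every pole of $f^{(k)}$ has multiplicity at least $k+1$, is what makes the pole term controllable), and Lemmas \ref{l2.9}, \ref{l2.11}, \ref{l2.14}, \ref{l2.16} resolve that system so these quantities become $S(r)$ or acquire coefficients small enough to close the count. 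This engine is entirely missing from your plan.

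Two smaller but still real problems. First, your second main theorem inequality has the wrong constant: with the $n+2$ targets $\{0,\infty,\beta_{1},\ldots,\beta_{n}\}$ the left side is $n\,T(r,F)$, not $(n-1)T(r,F)$; since the final contradiction is of the form $(n-2)T(r)\leq S(r)$, losing one unit is fatal at $n=3$. Second, in the $H\equiv 0$ case, even after you reach $P(F)\equiv P(G)$ you cannot ``match coefficients'' to conclude $F\equiv G$: these are meromorphic functions, and $F^{n-1}(F+a)\equiv G^{n-1}(G+a)$ admits, a priori, solutions with $h=F/G$ non-constant, namely $G=-a(h^{n-1}-1)/(h^{n}-1)$; eliminating these requires the uniqueness-polynomial analysis using the shared values $0$ and $\infty$, which the paper delegates wholesale to Lemma \ref{l2.24} quoted from \cite{1aa}.
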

\begin{theoE}\cite{1b} Let $S_{i}$, $i=1,2,3$ be defined as in {\em Theorem B} and $k$ be a positive integer. If $f$ and $g$ are two non-constant meromorphic functions such that $E_{f^{(k)}}(S_{1},6)=E_{g^{(k)}}(S_{1},6)$, $E_{f}(S_{2},\infty)=E_{g}(S_{2},\infty)$ and $E_{f^{(k)}}(S_{3},0)=E_{g^{(k)}}(S_{3},0)$ then $f^{(k)}\equiv g^{(k)}$. \end{theoE}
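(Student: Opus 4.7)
The plan is to reduce the three-set sharing problem to value sharing for a carefully chosen pair of auxiliary functions. Using the factorisation $z^{n-1}(z+a)=-b$ on $S_1$, set
\[
F=-\frac{(f^{(k)})^{n-1}(f^{(k)}+a)}{b},\qquad G=-\frac{(g^{(k)})^{n-1}(g^{(k)}+a)}{b}.
\]
Then $f^{(k)}$ lies in $S_1$ exactly when $F=1$, so the condition $E_{f^{(k)}}(S_1,6)=E_{g^{(k)}}(S_1,6)$ translates to $F,G$ sharing $(1,6)$. The hypothesis $E_f(S_2,\infty)=E_g(S_2,\infty)$ forces $f,g$, hence $f^{(k)},g^{(k)}$, hence $F,G$, to share poles CM. The weight-$0$ condition at $S_3$ gives IM agreement of the zeros of $F,G$ arising from $f^{(k)}=0$ (each of multiplicity $n-1$); the zeros from $f^{(k)}=-a$ remain only weakly controlled, but $\overline N(r,-a;f^{(k)})\le T(r,f^{(k)})+S(r)$ by the first main theorem. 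Note that $T(r,F)=nT(r,f^{(k)})+S(r)$ and similarly for $G$.

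The heart of the argument is the standard auxiliary function
\[
H=\left(\frac{F''}{F'}-\frac{2F'}{F-1}\right)-\left(\frac{G''}{G'}-\frac{2G'}{G-1}\right),
\]
and a dichotomy on whether $H\equiv 0$. If $H\not\equiv 0$, one enumerates the possible poles of $H$: common $1$-points of $F,G$ of unequal multiplicity, zeros of $F'$ and $G'$ away from $\{0,1,\infty\}$, and the uncontrolled $f^{(k)}=-a$ points. Since $F,G$ share $(1,6)$, every common $1$-point of $F$ and $G$ of multiplicity at most six is a zero of $H$; classical estimates then give $\overline N(r,1;F)\le N(r,0;H)+S(r)$ up to a small residual term coming from the multiplicity $\ge 7$ bucket. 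Feeding this into the second fundamental theorem applied to both $F$ and $G$ at $0,1,\infty$, and converting everything back to counting functions of $f^{(k)},g^{(k)}$ via the built-in multiplicities, I would arrive at an inequality of the form $nT(r)\le \Phi(n,k)\,T(r)+S(r)$ with $\Phi(n,k)<n$ for every $n\ge 3$, precisely because the weight $6$ at $S_1$ is tuned to absorb the worst-case contribution of $\overline N(r,0;f^{(k)})$ which here enters with no multiplicity reduction. This contradiction forces $H\equiv 0$.

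Two integrations of $H\equiv 0$ yield a Möbius relation $F=\dfrac{AG+B}{CG+D}$ with $AD-BC\neq 0$. The shared $(\infty,\infty)$ and $(1,6)$ structure then forces this bilinear map to have one of three canonical forms: $F\equiv G$, a reciprocal-type relation $(F-1)(G-1)\equiv c$, or an exchange of $1$ and $\infty$. The last two possibilities would force $f^{(k)}$ and $g^{(k)}$ into exponential shapes essentially as in Examples \ref{e1.1}--\ref{e1.4}; standard Picard/Nevanlinna deficiency considerations combined with $n\ge 3$ and $a\ne 0$ eliminate these. Hence $F\equiv G$, and because $z^n+az^{n-1}+b$ has distinct roots, a direct polynomial identity argument in $f^{(k)}$ and $g^{(k)}$ concludes that $f^{(k)}\equiv g^{(k)}$. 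The main obstacle I expect is the delicate second-fundamental-theorem bookkeeping in the $H\not\equiv 0$ case: with only IM information at $S_3$, the reduced counting function of zeros of $f^{(k)}$ enters with the least favourable coefficient, so one must exhaust the full weight $6$ at $S_1$ via a fine multiplicity split of $\overline N(r,1;F)$ to close the inequality at the threshold $n=3$.
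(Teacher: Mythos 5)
Your skeleton (the functions $F,G$ of (\ref{e2.1}), the dichotomy on $H$, the second fundamental theorem) is the same one underlying the paper, which obtains Theorem E as the special case $k_{1}=6$, $k_{2}=\infty$, $k_{3}=0$ of Theorem \ref{t1.1}. But there is a concrete error at the centre of your $H\not\equiv 0$ case: the claim that ``every common $1$-point of $F,G$ of multiplicity at most six is a zero of $H$'' is false. At a common $1$-point $z_{0}$ where $F-1$ vanishes to order $p$ and $G-1$ to order $q$, one computes $\frac{F''}{F'}-\frac{2F'}{F-1}=\frac{-(p+1)}{z-z_{0}}+O(1)$ and likewise for $G$, so $H=\frac{q-p}{z-z_{0}}+O(1)$; equal multiplicities only make $H$ \emph{regular} there, and one can conclude $H(z_{0})=0$ only for simple common $1$-points (this is exactly Lemma \ref{l2.1}). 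Consequently $\ol N(r,1;F)\leq N(r,0;H)+S(r)$ is not available; only $N(r,1;F\mid =1)$ is controlled this way. The weight $6$ is not consumed by $N(r,0;H)$ at all: in the paper it enters through Lemma \ref{l2.18} (which converts high-multiplicity $1$-points into the deficit $N(r,1;G)-\ol N(r,1;G)$ with coefficient $k_{1}$) and through bounds of the type $\ol N_{*}(r,1;F,G)\leq\frac{1}{k_{1}}\bigl(N(r,1;F)-\ol N(r,1;F)\bigr)$.

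The second, more serious gap is that the decisive inequality $nT(r)\leq\Phi(n,k)T(r)+S(r)$ with $\Phi(n,k)<n$ is asserted rather than derived, and at the threshold $n=3$ with only IM sharing of $0$ it does not close by second-fundamental-theorem bookkeeping alone: after Lemmas \ref{l2.1}, \ref{l2.2} and \ref{l2.18} one is left (as in Lemma \ref{l2.19}) with roughly $2T(r)$ from the $-a\frac{n-1}{n}$-points plus three copies of $\ol N(r,0;f^{(k)})$ against $3T(r,f^{(k)})$, so the argument survives only if $\ol N(r,0;f^{(k)})$ can be shown to be small. The paper achieves this with the auxiliary functions $\Phi_{1},\Phi_{2},\Phi_{3}$ and the mutual-domination estimates of Lemmas \ref{l2.6}, \ref{l2.8}, \ref{l2.10} (culminating in Lemma \ref{l2.14} and, for $k_{2}=\infty$, in Lemmas \ref{l2.16}--\ref{l2.17}), which play the truncated counting functions of $0$, $\infty$ and the multiple $1$-points off against one another; the original proof of Theorem E in \cite{1b} uses an equivalent device. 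Without some such mechanism your outline is not a proof. Finally, the $H\equiv 0$ endgame --- eliminating the non-identity M\"obius relations between $F$ and $G$ --- is the content of Lemma \ref{l2.24} and is itself a nontrivial case analysis, though your sketch at least points at the correct trichotomy there.
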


A few years latter in 2011 Banerjee and Bhattacharjee \cite{1aa} further improved the above results in the following manner.
\begin{theoF}\cite{1aa} Let $S_{i}$, $i=1,2,3$ be defined as in {\em Theorem B} and $k$ be a positive integer. If $f$ and $g$ are two non-constant meromorphic functions such that $E_{f^{(k)}}(S_{1},5)=E_{g^{(k)}}(S_{1},5)$, $E_{f}(S_{2},\infty)=E_{g}(S_{2},\infty)$ and $E_{f^{(k)}}(S_{3},0)=E_{g^{(k)}}(S_{3},0)$ then $f^{(k)}\equiv g^{(k)}$. \end{theoF}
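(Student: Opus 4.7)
The plan is to set $F=f^{(k)}$ and $G=g^{(k)}$ and translate the hypothesis into a value-sharing condition. Since $f$ and $g$ share $\infty$ CM, a pole of $f$ of order $p$ produces a pole of $F$ of order $p+k$ at the same point, and likewise for $g$; hence $F$ and $G$ share $\infty$ CM. The remaining conditions yield that $F$ and $G$ share $0$ with weight $0$ and that, writing $P(z)=z^{n}+az^{n-1}+b$, the functions $P(F)$ and $P(G)$ share $0$ with weight $5$.

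Now introduce $\mathcal{F}=-(F^{n}+aF^{n-1})/b$ and $\mathcal{G}=-(G^{n}+aG^{n-1})/b$, so that $\mathcal{F}-1=-P(F)/b$ and similarly for $\mathcal{G}$. Then $\mathcal{F}$ and $\mathcal{G}$ share $(1,5)$ and $(\infty,\infty)$, while their zeros arise from $F^{n-1}(F+a)$ and $G^{n-1}(G+a)$ respectively; thus the IM sharing at $0$ controls part of the zero sets, but the $-a$-points of $F$ and $G$ are uncontrolled. Introduce Lahiri's auxiliary function
\[
\mathcal{H}=\left(\frac{\mathcal{F}''}{\mathcal{F}'}-\frac{2\mathcal{F}'}{\mathcal{F}-1}\right)-\left(\frac{\mathcal{G}''}{\mathcal{G}'}-\frac{2\mathcal{G}'}{\mathcal{G}-1}\right).
\]
If $\mathcal{H}\not\equiv 0$, invoke the standard weighted-sharing second-main-theorem inequality (applicable since the weight $5\geq 2$), together with the Valiron--Mohon'ko identity $T(r,\mathcal{F})=nT(r,F)+S(r,F)$ and the CM-sharing at $\infty$ for $\mathcal{F},\mathcal{G}$, to arrive at an inequality of the form
\[
n\bigl\{T(r,F)+T(r,G)\bigr\}\leq 2\bigl\{T(r,F)+T(r,G)\bigr\}+S(r),
\]
which contradicts $n\geq 3$.

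Hence $\mathcal{H}\equiv 0$, and two successive integrations give
\[
\frac{1}{\mathcal{F}-1}=\frac{A}{\mathcal{G}-1}+B
\]
for constants $A\neq 0$ and $B$. I would then analyse three subcases according as $B=0$; $B\neq 0$ with $A=B$; or $B\neq 0$ with $A\neq B$. In the two non-trivial branches, expressing $\mathcal{F}$ in terms of $\mathcal{G}$, substituting into the $(\infty,\infty)$ and $(0,0)$ sharing information, and applying the second fundamental theorem to $F$ or $G$ leads to a contradiction with $n\geq 3$. In the remaining branch $B=0$ one obtains $F^{n}+aF^{n-1}\equiv c(G^{n}+aG^{n-1})$ for a nonzero constant $c$; the CM pole-sharing forces $c=1$, and then the factorisation uniqueness of $P$ (guaranteed by the assumption that $P$ has no repeated root and $a\neq 0$) produces $F\equiv G$, that is $f^{(k)}\equiv g^{(k)}$.

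The principal obstacle is the subcase analysis when $\mathcal{H}\equiv 0$: eliminating the genuinely exceptional Möbius-type relations between $\mathcal{F}$ and $\mathcal{G}$ forces one to combine the weight-$5$ sharing at the zeros of $P(F)$ with merely IM sharing at $F=0$, and to invoke $n\geq 3$ sharply — essentially the same threshold witnessed by the pathological pairs in Examples \ref{e1.1}--\ref{e1.4} at lower $n$. A secondary technical difficulty is discharging the extra $\overline{N}(r,0;F)+\overline{N}(r,0;G)$ contribution in the Lahiri inequality that arises from weight $0$ (rather than higher weight) sharing at $S_{3}$; it is precisely the $nT(r,F)$ factor from $T(r,\mathcal{F})=nT(r,F)+S(r,F)$, balanced against these reduced counting terms, that yields the quantitative improvement of Theorem F over Theorem E.
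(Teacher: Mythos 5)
Your overall architecture (pass to $\mathcal F=-\bigl((f^{(k)})^{n}+a(f^{(k)})^{n-1}\bigr)/b$, split on whether the Lahiri auxiliary function $\mathcal H$ vanishes identically, use a second-main-theorem count in one branch and integrate twice in the other) is exactly the skeleton used in the paper, whose Theorem~1.1 with $k_{1}=5$, $k_{2}=\infty$, $k_{3}=0$ recovers this statement. But there is a genuine gap at the heart of the $\mathcal H\not\equiv 0$ branch. The ``standard weighted-sharing second-main-theorem inequality'' you invoke gives, after unpacking, something like $T(r,\mathcal F)\leq N_{2}(r,0;\mathcal F)+N_{2}(r,0;\mathcal G)+N_{2}(r,\infty;\mathcal F)+N_{2}(r,\infty;\mathcal G)+S(r)$, and since $N_{2}(r,0;\mathcal F)\leq 2\ol N(r,0;f^{(k)})+N_{2}(r,-a;f^{(k)})$ and $N_{2}(r,\infty;\mathcal F)\leq 2\ol N(r,\infty;f)$, each of which costs up to a full $T(r,f^{(k)})$ or more, the resulting threshold on $n$ is far above $3$. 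To reach $n\geq 3$ one must show that $\ol N(r,0;f^{(k)})$, $\ol N(r,\infty;f)$ and $\ol N_{*}(r,1;\mathcal F,\mathcal G)$ are each small (indeed $S(r)$ or a small multiple of one another), and this is precisely what the paper's auxiliary functions $\Phi_{1},\Phi_{2},\Phi_{3}$ and Lemmas~\ref{l2.6}, \ref{l2.8}--\ref{l2.11}, \ref{l2.16}, \ref{l2.17} and \ref{l2.19} accomplish through a multi-layered subcase analysis (each $\Phi_{i}\equiv 0$ or not). You acknowledge this as ``the principal obstacle'' but then assert its conclusion without supplying any of it; as written, the inequality $n\{T(r,F)+T(r,G)\}\leq 2\{T(r,F)+T(r,G)\}+S(r)$ does not follow from what you have invoked.

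There is also a flaw in your $\mathcal H\equiv 0$ branch. From $\tfrac{1}{\mathcal F-1}=\tfrac{A}{\mathcal G-1}+B$ with $B=0$ you get $P(F)=A^{-1}P(G)$ where $P(z)=z^{n}+az^{n-1}+b$; this yields $F^{n}+aF^{n-1}\equiv A^{-1}(G^{n}+aG^{n-1})$ only when $A=1$ (otherwise a nonzero constant term $(A^{-1}-1)b$ survives), so ``CM pole-sharing forces $c=1$'' is not the step that is needed. Moreover, even granting $F^{n-1}(F+a)\equiv G^{n-1}(G+a)$, the conclusion $F\equiv G$ does not follow from ``factorisation uniqueness of $P$'': $P$ is not injective, so $P(F)\equiv P(G)$ is compatible a priori with $F\not\equiv G$ (compare Examples~\ref{e1.2}--\ref{e1.4}). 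The standard route is to set $h=F/G$, derive $G=a(1-h^{n-1})/(h^{n}-1)$, and rule out non-constant $h$ by a value-distribution argument using $n\geq 3$ and the sharing of $0$ and $\infty$; the paper encapsulates all of this in Lemma~\ref{l2.24}, quoted from \cite{1aa}. Your sketch omits this argument entirely.
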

\begin{theoG}\cite{1aa} Let $S_{i}$, $i=1,2,3$ be defined as in {\em Theorem B} and $k$ be a positive integer. If $f$ and $g$ are two non-constant meromorphic functions such that $E_{f^{(k)}}(S_{1},4)=E_{g^{(k)}}(S_{1},4)$, $E_{f}(S_{2},\infty)=E_{g}(S_{2},\infty)$ and $E_{f^{(k)}}(S_{3},1)=E_{g^{(k)}}(S_{3},1)$ then $f^{(k)}\equiv g^{(k)}$. \end{theoG}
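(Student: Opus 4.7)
The plan is to convert the set-sharing at $S_1$ into weighted value-sharing between two auxiliary polynomials in $f^{(k)}$ and $g^{(k)}$, and then drive the argument by the standard ``$H$-function'' method, subject to a careful numerical balance between the weights $4$ (at $S_1$) and $1$ (at $S_3$). Set
\[F=-\frac{[f^{(k)}]^{n-1}(f^{(k)}+a)}{b},\qquad G=-\frac{[g^{(k)}]^{n-1}(g^{(k)}+a)}{b}.\]
Since $S_1$ is the set of simple roots of $z^n+az^{n-1}+b$, the hypothesis $E_{f^{(k)}}(S_1,4)=E_{g^{(k)}}(S_1,4)$ is equivalent to $F,G$ sharing $(1,4)$. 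Poles of $f^{(k)}$ (resp.\ $g^{(k)}$) are precisely the poles of $f$ (resp.\ $g$) with orders augmented by $k$, so $E_f(S_2,\infty)=E_g(S_2,\infty)$ makes $F$ and $G$ share $\infty$ CM. The zeros of $F$ and $G$ produced by $f^{(k)}=0$ and $g^{(k)}=0$ have order $n-1$, and the sharing $E_{f^{(k)}}(S_3,1)=E_{g^{(k)}}(S_3,1)$ controls them with weight $1$.

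The core device is the function
\[H=\left(\frac{F''}{F'}-\frac{2F'}{F-1}\right)-\left(\frac{G''}{G'}-\frac{2G'}{G-1}\right).\]
The first step is to force $H\equiv 0$. Assuming $H\not\equiv 0$, a standard pole-counting of $H$---using the weighted sharing $(1,4)$ to absorb multiple common $1$-points and the weight $1$ at $S_3$ to absorb the order-$(n-1)$ common zeros of $F, G$---combined with the second fundamental theorem applied to $F$ and $G$ at the value $1$ yields an inequality of the form
\[(n+1)\bigl(T(r,f^{(k)})+T(r,g^{(k)})\bigr)\leq C\bigl(T(r,f^{(k)})+T(r,g^{(k)})\bigr)+S(r)\]
with $C<n+1$ for $n\geq 3$, a contradiction. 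Achieving this numerical balance with the relaxed weights $(4,\infty,1)$ is the principal obstacle, and it is precisely where the improvement over Theorems C--F is realized.

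With $H\equiv 0$, integrating twice gives $\dfrac{1}{F-1}=\dfrac{A}{G-1}+B$ for some constants $A\neq 0$, $B$. A case analysis on $(A,B)$, driven by comparison of pole counting functions (using the CM pole sharing) and by the restricted zero distribution imposed by the $S_3$ condition, eliminates every branch except $F\equiv G$ and $FG\equiv 1$. The case $FG\equiv 1$ would give
\[[f^{(k)}]^{n-1}(f^{(k)}+a)\cdot[g^{(k)}]^{n-1}(g^{(k)}+a)\equiv b^2,\]
forcing $f^{(k)}, g^{(k)}, f^{(k)}+a, g^{(k)}+a$ to omit $0$; the second fundamental theorem applied to $f^{(k)}$ then contradicts $n\geq 3$. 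Hence $F\equiv G$.

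Finally, $F\equiv G$ factors as $(f^{(k)}-g^{(k)})\cdot Q(f^{(k)},g^{(k)})\equiv 0$ with $Q(u,v)=\sum_{i+j=n-1}u^iv^j+a\sum_{i+j=n-2}u^iv^j$. On the branch $Q\equiv 0$, writing $t=f^{(k)}/g^{(k)}$ yields $g^{(k)}(1+t+\cdots+t^{n-1})+a(1+t+\cdots+t^{n-2})\equiv 0$; if $t$ were a non-unit constant, it would be a common root of $1+z+\cdots+z^{n-1}$ and $1+z+\cdots+z^{n-2}$, which is impossible since $\gcd(n,n-1)=1$, while if $t$ is non-constant a Nevanlinna count on the displayed relation forces $T(r,g^{(k)})=S(r)$, a contradiction. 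Therefore $f^{(k)}\equiv g^{(k)}$, completing the proof.
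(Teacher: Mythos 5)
Your skeleton --- the auxiliary functions $F$, $G$ of (\ref{e2.1}), the function $H$, a second-fundamental-theorem contradiction when $H\not\equiv 0$, and integration plus case analysis when $H\equiv 0$ --- is the same framework the paper uses (Theorem G is obtained there as the special case $k_{1}=4$, $k_{2}=\infty$, $k_{3}=1$ of Theorem \ref{t1.1}). But there is a genuine gap at exactly the step you yourself call ``the principal obstacle'': you assert, with no computation, that pole-counting of $H$ plus the second fundamental theorem yields $(n+1)\bigl(T(r,f^{(k)})+T(r,g^{(k)})\bigr)\leq C\bigl(T(r,f^{(k)})+T(r,g^{(k)})\bigr)+S(r)$ with $C<n+1$. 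Nothing in your write-up produces such a bound, and its shape is not even the one that actually arises: the estimate coming from the second fundamental theorem (Lemma \ref{l2.19}) has the form $nT(r,f^{(k)})\leq 2T(r)+(\text{extra reduced counting functions})+S(r)$, so for the extremal case $n=3$ the whole argument lives or dies on showing that every extra term --- in particular $\ol N(r,1;F\mid\geq 5)$, $\ol N_{*}(r,\infty;f,g)$ and $\ol N(r,0;f^{(k)}\mid\geq 2)$ --- is $S(r)$. Whether the relaxed weights $4$ at $S_{1}$ and $1$ at $S_{3}$ suffice for this is the entire content of the theorem and cannot be waved through as ``standard''.

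The paper's mechanism for closing exactly this gap is qualitatively different from a one-shot estimate: it introduces the further auxiliary functions $\Phi_{1}$, $\Phi_{2}$, $\Phi_{3}$, and Lemmas \ref{l2.6}, \ref{l2.8} and \ref{l2.10} convert the weight at $S_{1}$, the weight at $S_{3}$ and the CM pole sharing into a circular system of linear inequalities among $\ol N(r,1;F\mid\geq k_{1}+1)$, $\ol N(r,0;f^{(k)}\mid\geq k_{3}+1)$ and $\ol N(r,\infty;f\mid\geq k_{2}+1)$; the numerical condition of Theorem \ref{t1.1} is precisely what makes this system force each of those quantities to be $S(r)$ (Lemma \ref{l2.14}), after which the second fundamental theorem gives $(n-2)T(r)\leq S(r)$. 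Your sketch contains no substitute for this machinery. The $H\equiv 0$ half of your argument (integration, elimination of $FG\equiv 1$, and $F\equiv G\Rightarrow f^{(k)}\equiv g^{(k)}$ via $t=f^{(k)}/g^{(k)}$) is the standard route and is what the paper's Lemma \ref{l2.24} encapsulates, though there too you leave the decisive subcases asserted rather than counted: the elimination of the intermediate $(A,B)$ branches, the fact that $FG\equiv 1$ kills the poles (so the contradiction is really Picard's theorem, not an application of the second fundamental theorem ``contradicting $n\geq 3$''), and the Nevanlinna count that disposes of non-constant $t$. To make the proof complete you must supply the explicit estimates in the $H\not\equiv 0$ case.
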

\begin{theoH}\cite{1aa} Let $S_{i}$, $i=1,2,3$ be defined as in {\em Theorem B} and $k$ be a positive integer. If $f$ and $g$ are two non-constant meromorphic functions such that $E_{f^{(k)}}(S_{1},5)=E_{g^{(k)}}(S_{1},5)$, $E_{f}(S_{2},9)=E_{g}(S_{2},9)$ and $E_{f^{(k)}}(S_{3},\infty)=E_{g^{(k)}}(S_{3},\infty)$ then $f^{(k)}\equiv g^{(k)}$. \end{theoH}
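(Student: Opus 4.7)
Let me sketch a proof plan for Theorem H.

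\textbf{Setup and reduction.} Write $F=f^{(k)}$, $G=g^{(k)}$ and introduce the auxiliary functions
\[
\mathcal{F}=-\tfrac{1}{b}\bigl(F^{n}+aF^{n-1}\bigr),\qquad \mathcal{G}=-\tfrac{1}{b}\bigl(G^{n}+aG^{n-1}\bigr),
\]
so that $\mathcal{F}-1=-P(F)/b$ and $\mathcal{G}-1=-P(G)/b$, where $P(w)=w^{n}+aw^{n-1}+b$ has the elements of $S_{1}$ as its simple roots. The hypothesis $E_{f^{(k)}}(S_{1},5)=E_{g^{(k)}}(S_{1},5)$ becomes $\mathcal{F}$ and $\mathcal{G}$ share $(1,5)$; the hypothesis $E_{f^{(k)}}(S_{3},\infty)=E_{g^{(k)}}(S_{3},\infty)$ says $F,G$ share $(0,\infty)$; and $E_{f}(S_{2},9)=E_{g}(S_{2},9)$ says that poles of $f$ and $g$ of the same (arbitrary) multiplicity up to $9$ occur together, which in turn controls the discrepancy of the pole sets of $F$ and $G$ through the shift by $k$ caused by differentiation. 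The goal is to prove $F\equiv G$; this reduces, since $P(F)\equiv P(G)$ and $P$ has distinct roots with $F,G$ sharing $0$ CM, to first establishing $\mathcal{F}\equiv\mathcal{G}$.

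\textbf{Analysis of the Banerjee--Bhattacharjee auxiliary function.} Define
\[
H=\Bigl(\tfrac{\mathcal{F}''}{\mathcal{F}'}-\tfrac{2\mathcal{F}'}{\mathcal{F}-1}\Bigr)-\Bigl(\tfrac{\mathcal{G}''}{\mathcal{G}'}-\tfrac{2\mathcal{G}'}{\mathcal{G}-1}\Bigr).
\]
The plan is the standard dichotomy: either $H\equiv 0$, in which case two integrations yield $1/(\mathcal{F}-1)=A/(\mathcal{G}-1)+B$ and a case analysis (using $n\geq 3$ and the nonvanishing of $a,b$) forces $\mathcal{F}\equiv\mathcal{G}$; or $H\not\equiv 0$, and we derive a contradiction. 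In the latter case the simple poles of $H$ can arise only from: common $1$-points of $\mathcal{F},\mathcal{G}$ of multiplicity larger than $5$, poles of $\mathcal{F}$ or $\mathcal{G}$ that are not shared with matching multiplicity, multiple zeros of $F$ (equivalently of $\mathcal{F}'$) and similarly for $G$, and multiple $a$-points of $F$, $G$. Using the assumed CM-sharing of $0$ and the $(\infty,9)$ sharing of $f,g$ one obtains a bound
\[
N(r,\infty;H)\leq \ol{N}_{*}(r,1;\mathcal{F},\mathcal{G})+\ol{N}_{*}(r,\infty;F,G)+\ol N(r,0;F\mid\geq 2)+\ol N(r,-a;F)+\ol N(r,-a;G)+S(r),
\]
where $\ol N_{*}(r,\infty;F,G)$ is controlled by the weight-$9$ hypothesis on $f,g$ after accounting for the shift by $k$. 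Feeding this into the second fundamental theorem for $\mathcal{F}$ at $0,1,\infty$ and similarly for $\mathcal{G}$, together with the Milloux-type estimate $nT(r,F)\leq T(r,\mathcal{F})+S(r)$, the total contribution of the above terms is strictly dominated by the left-hand side once $n\geq 3$, yielding the required contradiction.

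\textbf{From $\mathcal{F}\equiv\mathcal{G}$ to $F\equiv G$.} Once $P(F)\equiv P(G)$ we have $F^{n-1}(F+a)\equiv G^{n-1}(G+a)$. Setting $h=F/G$ and using that $F,G$ share $(0,\infty)$, $h$ is a meromorphic function with no zeros or poles outside the locus where $F+a$ or $G+a$ vanishes. If $h\not\equiv 1$ one exhibits, via $n\geq 3$ and the absence of repeated roots of $P$, a contradiction to the defining algebraic relation exactly as in the proof of Theorem F.

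\textbf{Main obstacle.} The critical difficulty, absent from Theorems C--G, is that poles of $f,g$ are only shared with weight $9$ rather than CM. The non-shared pole contribution $\ol N_{*}(r,\infty;F,G)$ that enters the estimate of $N(r,\infty;H)$ must therefore be shown to be sufficiently small; this is where the numerical strength $9$ of the weight is used, combining the fact that a non-shared pole of $F$ must come from a pole of $f$ of multiplicity at least $10$ with the standard inequality controlling it by terms of the form $\tfrac{1}{10}\ol N(r,\infty;f)$. Balancing this loss against the CM-sharing of zeros (which removes the corresponding $\ol N_{*}(r,0;F,G)$ from the estimate) is precisely what allows the argument to close.
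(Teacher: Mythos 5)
Your overall skeleton --- pass to $\mathcal{F}=-\frac{1}{b}(F^{n}+aF^{n-1})$, split on $H\equiv 0$ versus $H\not\equiv 0$, bound $N(r,H)$, feed it into the second fundamental theorem, and finish with the algebraic step $P(F)\equiv P(G)\Rightarrow F\equiv G$ --- is the same one underlying the paper's Theorem 1.1, which subsumes Theorem H (take $k_{1}=5$, $k_{2}=9$, $k_{3}=0$ there). But two steps in your sketch are genuinely defective. First, your bound on $N(r,\infty;H)$ misidentifies the contribution of the zeros of $\mathcal{F}'$: since $\mathcal{F}'=-\frac{1}{b}F^{n-2}(nF+a(n-1))F'$, the unavoidable full-measure poles of $H$ come from the points where $F=-a\frac{n-1}{n}$ (and likewise for $G$), not from the $(-a)$-points of $F$ and $G$; a $(-a)$-point is a zero of $\mathcal{F}$, not of $\mathcal{F}'$, unless it is multiple. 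You also drop the zeros of $F'$ and $G'$ that are not zeros of $F(\mathcal{F}-1)$, which the paper carries as $\ol N_{0}(r,0;(f^{(k)})^{'})$ and later absorbs via Lemma 2.15. Compare the paper's Lemma 2.2.

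Second, and more seriously, the assertion that after substituting the $N(r,H)$ bound into the second fundamental theorem ``the total contribution is strictly dominated by the left-hand side once $n\geq 3$'' is precisely the part of the proof that cannot be waved through. For $n=3$ the left-hand side is $3T(r,f^{(k)})$, while the right-hand side already contains $\ol N(r,-a\frac{n-1}{n};f^{(k)})+\ol N(r,-a\frac{n-1}{n};g^{(k)})$ (up to $2T(r)$) together with $\ol N(r,0;f^{(k)})+\ol N(r,0;g^{(k)})$ and the pole terms, each of which can a priori be as large as $T(r)$; the naive count does not close even after adding the symmetric inequality for $g^{(k)}$. The real content of the theorem is the bootstrapping that shows $\ol N(r,0;f^{(k)})$, $\ol N(r,\infty;f)$ and $\ol N_{*}(r,1;\mathcal{F},\mathcal{G})$ are small fractions of $T(r)$ (or are $S(r)$), which the paper extracts from the three auxiliary functions $\Phi_{1}$, $\Phi_{2}$, $\Phi_{3}$ and the mutually reinforcing estimates of Lemmas 2.6, 2.8--2.11, 2.14, 2.16 and 2.17. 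Your only quantitative input --- that an unmatched pole of $f$ has multiplicity at least $10$, hence contributes with a factor $\frac{1}{10+k}$ --- addresses just one of these three quantities and is nowhere near sufficient. Until those estimates are supplied, the $H\not\equiv 0$ case is not proved.
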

In the present paper we we significantly reduce the weight of the range sets in all the above theorems.  The following theorems are the main results of the paper:
\begin{theo}\label{t1.1} Let $S_{i}$, $i=1,2,3$ be defined as in {\em Theorem B} and $k$ be a positive integer. If $f$ and $g$ are two non-constant meromorphic functions such that $E_{f^{(k)}}(S_{1},k_{1})=E_{g^{(k)}}(S_{1},k_{1})$, $E_{f}(S_{2},k_{2})=E_{g}(S_{2},k_{2})$ and $E_{f^{(k)}}(S_{3},k_{3})=E_{g^{(k)}}(S_{3},k_{3})$, where $k_{1}\geq 4$, $k_{2}\geq 0$, $k_{3}\geq 0$ are integers satisfying $$2k_{1}k_{2}k_{3}>k_{1}+k_{2}+2k_{3}+k-2kk_{1}k_{3}-k_{1}k_{2}-kk_{1}+3,$$ then $f^{(k)}\equiv g^{(k)}$. \end{theo}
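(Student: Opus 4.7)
The plan is to encode the three set-sharing hypotheses as weighted value-sharing for two auxiliary meromorphic functions built from $f^{(k)}$ and $g^{(k)}$, and then to apply a Lahiri-type weighted-sharing lemma in conjunction with the Second Main Theorem. Following the standard construction in this circle of problems, I would set
$$F \;=\; -\frac{(f^{(k)})^{n}+a(f^{(k)})^{n-1}}{b}, \qquad G \;=\; -\frac{(g^{(k)})^{n}+a(g^{(k)})^{n-1}}{b}.$$
Because $P(w)=w^{n}+aw^{n-1}+b$ has no repeated root, $F=1$ precisely when $f^{(k)}\in S_{1}$, and the hypothesis $E_{f^{(k)}}(S_{1},k_{1})=E_{g^{(k)}}(S_{1},k_{1})$ translates directly into the weighted value-sharing $E_{k_{1}}(1;F)=E_{k_{1}}(1;G)$. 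The zeros of $F$ arise from zeros of $f^{(k)}$, which are governed by $S_{3}$-sharing with weight $k_{3}$, together with the $(-a)$-points of $f^{(k)}$. The $S_{2}$-sharing of the poles of $f$ and $g$ transfers, via the fact that a pole of $f$ of multiplicity $p$ becomes a pole of $f^{(k)}$ of multiplicity $p+k$, into a weighted pole-sharing of $F$ and $G$ whose effective weight is controlled jointly by $k_{2}$ and $k$.

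With this translation in hand, I would introduce the standard Lahiri auxiliary function
$$H \;=\; \left(\frac{F''}{F'}-\frac{2F'}{F-1}\right)-\left(\frac{G''}{G'}-\frac{2G'}{G-1}\right)$$
and argue by the usual dichotomy. If $H\equiv 0$, two integrations together with a singularity analysis produce either $F\equiv G$ or $FG\equiv 1$. If $H\not\equiv 0$, the zeros of $H$ are bounded by the ramification points of $F$ and $G$ away from $1$ and $\infty$, and feeding this estimate into the Second Main Theorem for $F$ and $G$ produces an inequality in which each of the four truncated counting functions $\ol N(r,0;F)$, $\ol N(r,\infty;F)$, $\ol N(r,0;G)$, $\ol N(r,\infty;G)$ is reduced by a factor depending on one of $k_{1},k_{2},k_{3}$ and inflated by a factor proportional to $k$. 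The weight condition $2k_{1}k_{2}k_{3}>k_{1}+k_{2}+2k_{3}+k-2kk_{1}k_{3}-k_{1}k_{2}-kk_{1}+3$ is then precisely the threshold at which this Second Main Theorem estimate collides with the trivial lower bound $T(r,F)+T(r,G)\geq n(T(r,f^{(k)})+T(r,g^{(k)}))+S(r)$, thereby ruling out the case $H\not\equiv 0$.

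It remains to eliminate $FG\equiv 1$, which reads $\bigl((f^{(k)})^{n}+a(f^{(k)})^{n-1}\bigr)\bigl((g^{(k)})^{n}+a(g^{(k)})^{n-1}\bigr)\equiv b^{2}$ and is killed by a standard first-main-theorem argument using $n\geq 3$ and $a\neq 0$, exactly as in the proofs of Theorems F--H and precisely as needed to exclude the configurations of Examples~\ref{e1.1}--\ref{e1.4}. In the surviving case $F\equiv G$, the polynomial identity $(f^{(k)})^{n-1}(f^{(k)}+a)\equiv (g^{(k)})^{n-1}(g^{(k)}+a)$ forces $f^{(k)}\equiv g^{(k)}$ after a short analysis of $h=f^{(k)}/g^{(k)}$. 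I expect the main obstacle to lie in the second paragraph: extracting precisely the constants $-kk_{1}$, $-2kk_{1}k_{3}$ and $-k_{1}k_{2}$ in the weight inequality requires delicate tracking of how multiplicities at poles of $f$ combine with those at zeros and poles of $f^{(k)}$ after differentiation, and it is exactly this technical refinement that is responsible for the improvement over Theorems F--H.
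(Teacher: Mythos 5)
Your setup coincides with the paper's: your $F$ and $G$ are literally the same functions as the paper's $F=\left(f^{(k)}\right)^{n-1}(f^{(k)}+a)/(-b)$, and the dichotomy on the Lahiri function $H$ is also the paper's Case 1/Case 2 split. But there is a genuine gap at exactly the point you flag as "the main obstacle": the weight condition $2k_{1}k_{2}k_{3}>k_{1}+k_{2}+2k_{3}+k-2kk_{1}k_{3}-k_{1}k_{2}-kk_{1}+3$ does \emph{not} come from a single application of the Second Main Theorem in which the truncated counting functions are each damped by a factor depending on one of $k_{1},k_{2},k_{3}$. A one-shot estimate of that kind yields a condition that is (roughly) a sum of reciprocals $\frac{1}{k_{1}+1}+\frac{1}{k_{2}+1}+\cdots$ being small, never a trilinear inequality. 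In the paper the condition arises from a completely different mechanism: three auxiliary logarithmic-derivative differences $\Phi_{1}=\frac{F'}{F-1}-\frac{G'}{G-1}$, $\Phi_{2}=\frac{(f^{(k)})'}{f^{(k)}}-\frac{(g^{(k)})'}{g^{(k)}}$ and $\Phi_{3}$ (built from two roots $\omega_{i},\omega_{j}$ of $z^{n}+az^{n-1}+b$), whose nonvanishing produces a \emph{coupled circular system} of inequalities among the three quantities $N_{1}=\ol N(r,1;F\mid\geq k_{1}+1)$, $N_{\infty}=\ol N(r,\infty;f\mid\geq k_{2}+1)$, $N_{0}=\ol N(r,0;f^{(k)}\mid\geq k_{3}+1)$, namely $(2k_{3}+1)N_{0}\leq N_{1}+N_{\infty}$, $k_{1}N_{1}\leq N_{0}+N_{\infty}$ and $(k_{2}+k)N_{\infty}\leq N_{1}+N_{0}$ (up to $S(r)$). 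The stated hypothesis is precisely the positivity of the determinant of this $3\times 3$ system, which forces $N_{0}+N_{1}+N_{\infty}=S(r)$ (the paper's Lemma 2.14); only after that does the Second Main Theorem estimate (Lemma 2.19) produce $(n-2)T(r)\leq S(r)$. Without introducing $\Phi_{2}$ and $\Phi_{3}$ (and handling the degenerate subcases $\Phi_{i}\equiv 0$, which integrate to explicit relations like $f^{(k)}\equiv cg^{(k)}$ and must be disposed of separately), you cannot recover the claimed inequality, so the core of the theorem is missing from your argument.

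A secondary, smaller issue: in the case $H\equiv 0$ you assert the dichotomy $F\equiv G$ or $FG\equiv 1$. Two integrations only give that $F$ is a M\"{o}bius transformation of $G$; the reduction to those two alternatives requires CM-type sharing of $0$, $1$ and $\infty$ by $F$ and $G$, which is not what the hypotheses provide (the sharing weights here are finite, and $0,\infty$ are shared by $f^{(k)}$ and $f$, not directly by $F$). The paper instead quotes a dedicated lemma (Lemma 2.24, from the authors' earlier work) which carries out the M\"{o}bius case analysis using the specific structure of $F$ and $G$ and concludes $f^{(k)}\equiv g^{(k)}$ directly, with no $FG\equiv 1$ branch surviving to be excluded. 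Your final step (from $F\equiv G$ to $f^{(k)}\equiv g^{(k)}$ via $h=f^{(k)}/g^{(k)}$ and $n\geq 3$) is standard and fine.
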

\begin{rem} Note that {\em Theorem 1.1} holds for $k_{1}= 4$, $k_{2}= 2$ and $k_{3}= 0$. So {\em Theorem 1.1} improves {\em Theorems A-H}.\end{rem}
\begin{rem} {\em Examples 1.2-1.4} assures the fact that in {\em Theorem \ref{t1.1}}, $n\geq 3$ is the best possible.\end{rem}
Though we follow the standard definitions and notations of the value distribution theory available in \cite{7}, we explain some notations which are used in the paper.
\begin{defi} \label{d1.3}\cite{9} For $a\in\mathbb{C}\cup\{\infty\}$we denote by $N(r,a;f\vline=1)$ the counting function of simple $a$ points of $f$. For a positive integer $m$ we denote by $N(r,a;f\vline\leq m) (N(r,a;f\vline\geq m))$ the counting function of those $a$ points of $f$ whose multiplicities are not greater(less) than $m$ where each $a$ point is counted according to its multiplicity.

$\ol N(r,a;f\vline\leq m)\; (\ol N(r,a;f\vline\geq m))$ are defined similarly, where in counting the $a$-points of $f$ we ignore the multiplicities.

Also $N(r,a;f\vline <m),\; N(r,a;f\vline >m),\; \ol N(r,a;f\vline <m)\; and\; \ol N(r,a;f\vline >m)$ are defined analogously.  \end{defi}
\begin{defi}\label{d1.4} We denote by $\ol N(r,a;f\vline=k)$ the reduced counting function of those $a$-points of $f$ whose multiplicities is exactly $k$, where $k\geq 2$ is an integer. \end{defi}
\begin{defi} \label{d1.5}\cite{1a} Let $f$ and $g$ be two non-constant meromorphic functions such that $f$ and $g$ share $(a,k)$ where $a\in\mathbb{C}\cup\{\infty\}$. Let $z_{0}$ be an $a$-point of $f$ with multiplicity $p$, a $a$-point of $g$ with multiplicity $q$. We denote by $\ol N_{L}(r,a;f)$ the counting function of those $a$-points of $f$ and $g$ where $p>q$; each point in this counting functions is counted only once. In the same way we can define $\ol N_{L}(r,a;g)$. \end{defi}
\begin{defi}\label{d1.6} \cite{11} We denote $N_{2}(r,a;f)=\ol N(r,a;f)+\ol N(r,a;f\vline\geq 2)$\end{defi}
\begin{defi}\label{d1.7}\cite{10, 11} Let $f$, $g$ share a value $a$ IM. We denote by $\ol N_{*}(r,a;f,g)$ the reduced counting function of those $a$-points of $f$ whose multiplicities differ from the multiplicities of the corresponding $a$-points of $g$.

Clearly $\ol N_{*}(r,a;f,g)\equiv\ol N_{*}(r,a;g,f)$ and $\ol N_{*}(r,a;f,g)=\ol N_{L}(r,a;f)+\ol N_{L}(r,a;g)$.\end{defi}

\begin{defi}\label{d1.8}\cite{14} Let $a,b \in\mathbb{C}\cup\{\infty\}$. We denote by $N(r,a;f\vline\; g=b)$ the counting function of those $a$-points of $f$, counted according to multiplicity, which are $b$-points of $g$.\end{defi}
\begin{defi}\label{d1.9}\cite{14} Let $a,b_{1},b_{2},\ldots,b_{q} \in\mathbb{C}\;\cup\{\infty\}$. We denote by $N(r,a;f\vline\; g\neq b_{1},b_{2},\ldots,b_{q})$ the counting function of those $a$-points of $f$, counted according to multiplicity, which are not the $b_{i}$-points of $g$ for $i=1,2,\ldots,q$. \end{defi}
\begin{defi}\label{d1.10} Let $f$ and $g$ be two non-constant meromorphic functions such that $E_{f}(S,k)=E_{g}(S,k)$. Let $a$ and $b$ be any two elements of $S$. We denote by $\ol N_{*}(r,a;f|g=b)$ the reduced counting function of those $a$-points of $f$ whose multiplicities differ from the multiplicities of the corresponding $b$-points of $g$.\\ Clearly $\ol N_{*}(r,a;f|g=b)=\ol N_{*}(r,b;g|f=a)$. Also if $a=b$, then $\ol N_{*}(r,a;f|g=b)=\ol N_{*}(r,a;f,g)$.   \end{defi}

\section {Lemmas} In this section we present some lemmas which will be needed in the sequel. Let $F$ and $G$ be two non-constant meromorphic functions defined as follows.\\
\be{\label{e2.1}}F=\frac{\left(f^{(k)}\right)^{n-1}(f^{(k)}+a)}{-b},\;\;\;\;\;\;\;\; G=\frac{\left(g^{(k)}\right)^{n-1}(g^{(k)}+a)}{-b},\ee where $n(\geq 2)$ and $k$ are two positive integers.

Henceforth we shall denote by $H$, $\Phi_{1}$,$\Phi_{2}$ and $\Phi_{3}$ the following three functions $$H=(\frac{F^{''}}{F^{'}}-\frac{2F^{'}}{F-1})-(\frac{G^{''}}{G^{'}}-\frac{2G^{'}}{G-1}),$$ $$\Phi_{1}=\frac{F^{'}}{F-1}-\frac{G^{'}}{G-1},$$ $$\Phi_{2}=\frac{(f^{(k)})^{'}}{f^{(k)}}-\frac{(g^{(k)})^{'}}{g^{(k)}} $$ and $$\Phi_{3}=(\frac{(f^{(k)})^{'}}{f^{(k)}-\omega_{i}}-\frac{(f^{(k)}){'}}{f^{(k)}})-(\frac{(g^{(k)})^{'}}{g^{(k)}-\omega_{j}}-\frac {(g^{(k)}){'}}{g^{(k)}}),$$ where $\omega_{i}$ and $\omega_{j}$  be any two roots of the equation $z^{n}+az^{n-1}+b=0$
\begin{lem}\label{l2.1}(\cite{11}, Lemma 1) Let $F$, $G$ share $(1,1)$ and $H\not\equiv 0$. Then $$N(r,1;F\mid=1)=N(r,1;G\mid=1)\leq N(r,H)+S(r,F)+S(r,G).$$\end{lem}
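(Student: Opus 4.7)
The proof follows the standard template for this type of estimate: one shows that simple common $1$-points of $F$ and $G$ are zeros of the auxiliary function $H$, and then bounds $N(r,0;H)$ by the first main theorem together with the lemma on the logarithmic derivative.

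First I would compute the local behaviour of $H$ near a simple common $1$-point. Since $F$ and $G$ share $(1,1)$, any simple $1$-point $z_0$ of $F$ is also a simple $1$-point of $G$. Write $F-1=(z-z_0)\phi(z)$ and $G-1=(z-z_0)\psi(z)$ with $\phi,\psi$ holomorphic and nonvanishing at $z_0$. A direct Taylor expansion shows that $F''/F'$ is holomorphic at $z_0$ with value $2\phi'(z_0)/\phi(z_0)$, while $2F'/(F-1)=2/(z-z_0)+2\phi'/\phi$ has a simple pole of residue $2$ with the same constant term $2\phi'(z_0)/\phi(z_0)$. Consequently
\[
\frac{F''}{F'}-\frac{2F'}{F-1}=-\frac{2}{z-z_0}+O(z-z_0),
\]
and exactly the same expansion holds with $G$ in place of $F$. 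Subtracting, the polar parts and the constant terms cancel and $H(z_0)=0$. In particular, $H$ is holomorphic at $z_0$ and vanishes there.

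Next, from this local statement it follows that every simple $1$-point of $F$ (equivalently, of $G$) is counted in $N(r,0;H)$, so
\[
N(r,1;F\mid=1)=N(r,1;G\mid=1)\le N(r,0;H).
\]
By the first main theorem,
\[
N(r,0;H)\le T(r,H)+O(1)=N(r,H)+m(r,H)+O(1).
\]
Since $H$ is a sum of logarithmic derivatives of $F'$, $F-1$, $G'$ and $G-1$, the lemma on the logarithmic derivative yields $m(r,H)=S(r,F)+S(r,G)$. Combining the two inequalities gives the claim.

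The only delicate point is the cancellation of both the residues \emph{and} the constant terms in the local expansion at $z_0$, which is what forces $H$ to vanish (not merely be holomorphic) at simple common $1$-points; everything else is a standard application of the first main theorem and the lemma of logarithmic derivatives, which is why the hypothesis $H\not\equiv 0$ is essential (otherwise one could not pass from $H$ to $1/H$ to count its zeros).
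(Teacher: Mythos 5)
Your proof is correct and is exactly the standard argument behind this result, which the paper does not prove but simply quotes from Lahiri's work (\cite{11}, Lemma 1): the local expansion showing that both the residue $-2$ and the constant term cancel at a simple common $1$-point, so that $H$ vanishes there, followed by the first main theorem and the lemma on the logarithmic derivative applied to $m(r,H)$. The one hypothesis you use implicitly and correctly is that sharing $(1,1)$ guarantees a simple $1$-point of $F$ is a simple $1$-point of $G$, which is precisely what weight~$1$ provides; nothing further is needed.
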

\begin{lem}\label{l2.2} Let $S_{1}$, $S_{2}$ and $S_{3}$ be defined as in {\em Theorem \ref{t1.1}} and $F$, $G$ be given by (\ref{e2.1}). If for two non-constant meromorphic functions $f$ and $g$ $E_{f^{(k)}}(S_{1},0)=E_{g^{(k)}}(S_{1},0)$, $E_{f^{(k)}}(S_{2},0)=E_{g^{(k)}}(S_{2},0)$, $E_{f}(S_{3},0)=E_{g}(S_{3},0)$ and $H\not\equiv 0$ then \beas N(r,H)&\leq&\ol N_{*}(r,0;f^{(k)},g^{(k)})+\ol N_{*}(r,1;F,G)+\ol N(r,-a\frac{n-1}{n};f^{(k)})+\ol N(r,-a\frac{n-1}{n};g^{(k)})\\& &+\ol N_{*}(r,\infty;f,g)+\ol N_{0}(r,0;(f^{(k)})^{'})+\ol N_{0}(r,0;(g^{(k)})^{'}),\eeas where $\ol N_{0}(r,0;(f^{(k)})^{'})$ is the reduced counting function of those zeros of $(f^{(k)})^{'}$ which are not the zeros of $f^{(k)}(F-1)$ and $\ol N_{0}(r,0;(g^{(k)})^{'})$ is similarly defined.\end{lem}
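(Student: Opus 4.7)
The strategy is to bound $N(r, H)$ by decomposing the possible pole set of $H$ into a finite number of geometric loci and using each of the three IM-sharing hypotheses to force cancellation where possible. First I would observe that each of the four summands $F''/F',\ F'/(F-1),\ G''/G',\ G'/(G-1)$ carries only simple poles, so $H$ carries only simple poles; consequently $N(r,H)=\ol N(r,H)$ and it suffices to locate its simple poles point by point.

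Next, I would compute
$$F' \;=\; -\tfrac{1}{b}(f^{(k)})^{n-2}(f^{(k)})'\bigl[n f^{(k)} + a(n-1)\bigr],$$
which forces every candidate pole of $F''/F' - 2F'/(F-1)$ to lie in one of five loci: (i) 1-points of $F$, (ii) poles of $F$ (equivalently poles of $f$), (iii) zeros of $f^{(k)}$, (iv) $-a(n-1)/n$-points of $f^{(k)}$, (v) other zeros of $(f^{(k)})'$. The analogous list describes the $G$-side, and the three IM-sharing hypotheses ensure that the loci (i), (ii), (iii) for $F$ and $G$ coincide, so the residues from the two sides may cancel there, whereas in (iv) and (v) no such cancellation is available.

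A direct residue computation then yields: at a common 1-point of multiplicities $m_F, m_G$ the residues are $-(m_F+1)$ and $-(m_G+1)$, which cancel iff $m_F=m_G$ and otherwise contribute to $\ol N_{*}(r,1;F,G)$; at a common pole arising from poles of $f$ and $g$ of orders $p$ and $q$ the residues become $n(p+k)-1$ and $n(q+k)-1$, which cancel iff $p=q$ and otherwise contribute to $\ol N_{*}(r,\infty;f,g)$; at a common zero of $f^{(k)}$ and $g^{(k)}$ of orders $m,m'$ the factor $(f^{(k)})^{n-1}$ makes $F$ vanish to order $(n-1)m$, so the residues are $(n-1)m-1$ and $(n-1)m'-1$, contributing to $\ol N_{*}(r,0;f^{(k)},g^{(k)})$. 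In the remaining loci (iv) and (v) no cancellation is available: each $-a(n-1)/n$-point of $f^{(k)}$ of multiplicity $e$ makes $F'$ vanish to order $2e-1$ with $F\neq 1$ generically, giving one simple pole to $H$ counted in $\ol N(r,-a(n-1)/n;f^{(k)})$ (and likewise for $g^{(k)}$); while each other zero of $(f^{(k)})'$ — one not lying above a zero of $f^{(k)}$ nor a $1$-point of $F$ — contributes to $\ol N_{0}(r,0;(f^{(k)})')$, and similarly for $g$. Summing these simple-pole counts produces the stated bound.

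The main technical obstacle is the residue bookkeeping: one must verify that the five listed loci are exhaustive, that the residue formulas are correct in every regime (most delicate at poles, where the order of $F$ is $n$ times that of $f^{(k)}$), and that overlaps between loci do not yield extra contributions. In particular one must check that a zero of $(f^{(k)})'$ which coincides with either a 1-point of $F$ or a zero of $f^{(k)}$ has its residue already captured by the corresponding $\ol N_{*}$ term; this is precisely why $\ol N_{0}$ is defined by excluding zeros of $f^{(k)}(F-1)$. Once this case analysis is carried out cleanly, the desired inequality follows by adding the residue counts.
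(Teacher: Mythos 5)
Your proposal is correct and follows essentially the same route as the paper's own (much terser) proof: identify the possible poles of $H$ via the factorization $F'=-\tfrac{1}{b}(f^{(k)})^{n-2}(f^{(k)})'[nf^{(k)}+a(n-1)]$, note that $H$ has only simple poles, and check that at shared $1$-points, poles, and zeros of $f^{(k)}$ the two sides cancel exactly when the multiplicities agree. Your residue computations ($-(m_F+1)$ at $1$-points, $n(p+k)-1$ at poles, $(n-1)m-1$ at zeros of $f^{(k)}$) are accurate and in fact supply detail the paper omits.
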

\begin{proof} Since $E_{f^{(k)}}(S_{1},0)=E_{g^{(k)}}(S_{1},0)$ it follows that $F$ and $G$ share $(1,0)$.We have from (\ref{e2.1}) that $$F^{'}=[nf^{(k)}+(n-1)a](f^{(k)})^{n-2}(f^{(k)})^{'}/(-b)$$ and $$G^{'}=[ng^{(k)}+(n-1)a](g^{(k)})^{n-2}(g^{(k)})^{'}/(-b).$$ We can easily verify that possible poles of $H$ occur at (i) those zeros of $f^{(k)}$ and $g^{(k)}$ whose multiplicities are distinct from the multiplicities of the corresponding zeros of $g^{(k)}$ and $f^{(k)}$ respectively, (ii)zeros of $nf^{(k)}+a(n-1)$ and $ng^{(k)}+a(n-1)$, (iii) those poles of $f$ and $g$ whose multiplicities are distinct from the multiplicities of the corresponding poles of $g$ and $f$ respectively, (iv) those $1$-points of $F$ and $G$ with different multiplicities, (v) zeros of $(f^{(k)})^{'}$ which are not the zeros of $f^{(k)}(F-1)$, (vi) zeros of $(g^{(k)})^{'}$ which are not zeros of $g^{(k)}(G-1)$.\end{proof}
\begin{lem}\label{l2.4}\cite{17e} Let $f$ be a non-constant meromorphic function and let \[R(f)=\frac{\sum\limits _{k=0}^{n} a_{k}f^{k}}{\sum \limits_{j=0}^{m} b_{j}f^{j}}\] be an irreducible rational function in $f$ with constant coefficients $\{a_{k}\}$ and $\{b_{j}\}$where $a_{n}\not=0$ and $b_{m}\not=0$ Then $$T(r,R(f))=dT(r,f)+S(r,f),$$ where $d=\max\{n,m\}$.\end{lem}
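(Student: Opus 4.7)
The plan is to prove the Valiron--Mokhon'ko identity $T(r,R(f)) = d\,T(r,f) + S(r,f)$ by establishing matching upper and lower bounds for $T(r,R(f))$ in terms of $d\,T(r,f)$, then combining them.

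For the upper bound, I would split $T = m + N$. Away from the finite pole set $\{\beta\}$ of $R$ (with multiplicities $k_\beta$ and $\sum_\beta k_\beta = m$), one has the pointwise estimate $\log^+|R(w)| \leq (n-m)_+\log^+|w| + \sum_\beta k_\beta \log^+(1/|w-\beta|) + O(1)$. Substituting $w = f(z)$, integrating on $|z|=r$, and applying the First Main Theorem to each $m(r, 1/(f-\beta))$ yields $m(r,R(f)) \leq (n-m)_+\,m(r,f) + m\,T(r,f) + O(1)$. For the counting function, poles of $R(f)$ come either from poles of $f$ (a pole of order $\mu$ giving one of order $(n-m)_+\mu$) or from $\beta$-points of $f$ for $\beta$ a pole of $R$, producing $N(r,R(f)) \leq (n-m)_+\,N(r,f) + m\,T(r,f) + O(1)$. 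Adding and using $(n-m)_+ + m = \max(n,m) = d$ gives $T(r,R(f)) \leq d\,T(r,f) + O(1)$.

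For the lower bound I would invoke the Second Fundamental Theorem. Except for finitely many critical values of $R$ and, if $n=m$, the value $a_n/b_m$, any $c \in \mathbb{C}$ satisfies that $R(z) = c$ has exactly $d$ distinct simple roots. Fix $q \geq 3$ and choose generic $c_1,\ldots,c_q$ so that the $qd$ preimages $\{\alpha_{i,j}\}$ are pairwise distinct. Since each $\alpha_{i,j}$ is a simple root, $R$ is locally biholomorphic there, and multiplicities transfer exactly: $\sum_{j=1}^d \ol N(r,\alpha_{i,j};f) = \ol N(r,c_i;R(f))$. Applying the Second Fundamental Theorem to $f$ with the $qd$ targets $\alpha_{i,j}$,
\[
(qd-2)\,T(r,f) \leq \sum_{i,j} \ol N(r,\alpha_{i,j};f) + S(r,f) = \sum_{i=1}^q \ol N(r,c_i;R(f)) + S(r,f) \leq q\,T(r,R(f)) + S(r,f),
\]
and dividing by $q$ gives $T(r,R(f)) \geq (d - 2/q)\,T(r,f) - S(r,f)$.

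Combining the two bounds, $-S(r,f) - (2/q)\,T(r,f) \leq T(r,R(f)) - d\,T(r,f) \leq O(1)$ for each fixed $q$. Since $q$ can be chosen arbitrarily large and $S(r,f) = o(T(r,f))$, we conclude $T(r,R(f)) - d\,T(r,f) = o(T(r,f)) = S(r,f)$, the claimed identity. The main obstacle is the exact equality $\sum_j \ol N(r,\alpha_{i,j};f) = \ol N(r,c_i;R(f))$ with no correction term: this hinges on choosing $c_i$ as a regular value of $R$, so that each $\alpha_{i,j}$ is simple and multiplicities transfer cleanly under $R$. A secondary care point is the $q$-parametrized lower bound; because the upper bound is $O(1)$ rather than merely $S(r,f)$, taking $q$ arbitrarily large upgrades the coefficient $d-2/q$ to the exact value $d$ in the limit.
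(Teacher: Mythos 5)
The paper itself gives no proof of this lemma; it is the classical Valiron--Mokhon'ko theorem, quoted from \cite{17e} as a known result, so your attempt has to stand on its own merits. Your overall strategy (upper bound by splitting $T=m+N$, lower bound via the second fundamental theorem applied to the preimages of generic values) is a legitimate route, but your upper bound contains a genuine error. You state the two inequalities $m(r,R(f))\leq (n-m)_{+}\,m(r,f)+m\,T(r,f)+O(1)$ and $N(r,R(f))\leq (n-m)_{+}\,N(r,f)+m\,T(r,f)+O(1)$ and then ``add''; but the sum of these two displayed bounds is $T(r,R(f))\leq [(n-m)_{+}+2m]\,T(r,f)+O(1)$, since the term $m\,T(r,f)$ appears in both --- the identity $(n-m)_{+}+m=d$ does not rescue this. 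Concretely, for $R(w)=1/w$ (so $n=0$, $m=1$, $d=1$) your two bounds add to $2T(r,f)+O(1)$, whereas the lemma asserts $T(r,f)+O(1)$. The repair is exactly the cancellation you discarded when you weakened the first main theorem to an inequality: keep $m(r,1/(f-\beta))=T(r,f)-N(r,\beta;f)+O(1)$ as an identity, so the proximity estimate carries $-\sum_{\beta}k_{\beta}N(r,\beta;f)$ while the counting estimate carries $+\sum_{\beta}k_{\beta}N(r,\beta;f)$; adding then genuinely yields $T(r,R(f))\leq d\,T(r,f)+O(1)$. Without this cancellation the upper and lower bounds do not pinch $T(r,R(f))$ to $d\,T(r,f)$, and the theorem does not follow.

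There are two smaller gaps in the lower bound. First, the error term and the exceptional set in the second fundamental theorem applied to the $qd$ targets both depend on $q$: for each fixed $q$ you only get $T(r,R(f))\geq (d-2/q)T(r,f)-S_{q}(r,f)$ outside a set $E_{q}$ of finite measure. Your closing remark that the $O(1)$ upper bound lets you take $q\to\infty$ does not address this; you need either the observation that the exceptional set in the lemma of the logarithmic derivative can be chosen to depend only on $f$ and not on the targets, or a diagonal argument (choose $r_{q}\uparrow\infty$ with $|E_{q}\cap[r_{q},\infty)|<2^{-q}$ and $S_{q}(r,f)<T(r,f)/q$ off $E_{q}$ for $r\geq r_{q}$, and set $E=\bigcup_{q}\bigl(E_{q}\cap[r_{q},r_{q+1})\bigr)$). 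Second, for the exact identity $\sum_{j}\ol N(r,\alpha_{i,j};f)=\ol N(r,c_{i};R(f))$ you must exclude $c_{i}=R(\infty)$ in every case, not only when $n=m$: when $n<m$ the value $R(\infty)=0$ must also be avoided (otherwise poles of $f$ are $c_{i}$-points of $R(f)$ with no corresponding finite $\alpha_{i,j}$); your appeal to genericity covers this in spirit but should be said. With these repairs your argument becomes a correct, self-contained proof of the lemma that the paper merely cites.
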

\begin{lem}\label{l2.5}\cite{1aa} Let $F$ and $G$ be given by (\ref{e2.1}). If  $f^{(k)}$, $g^{(k)}$ share $(0,0)$ and $0$ is not a Picard exceptional value of $f^{(k)}$ and $g^{(k)}$. Then $\Phi_{1} \equiv 0$  implies $F\equiv G$.\end{lem}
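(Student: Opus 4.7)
The plan is to integrate the relation $\Phi_{1}\equiv 0$ and then pin down the resulting constant of integration by exploiting the shared zeros guaranteed by the hypothesis.

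First, I would rewrite $\Phi_{1}\equiv 0$ as
\[
\frac{F'}{F-1}=\frac{G'}{G-1},
\]
and integrate (in the sense of viewing both sides as logarithmic derivatives of meromorphic functions). This yields
\[
F-1=A(G-1)
\]
for some nonzero constant $A$. The goal is therefore reduced to showing $A=1$.

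Next, I would examine the value of $F$ at a zero of $f^{(k)}$. From the definition (\ref{e2.1}), if $z_{0}$ is a zero of $f^{(k)}$ then clearly $F(z_{0})=0$, and similarly $G(z_{0})=0$ whenever $g^{(k)}(z_{0})=0$. Now by hypothesis $f^{(k)}$ and $g^{(k)}$ share $(0,0)$ and $0$ is not a Picard exceptional value of $f^{(k)}$ (and hence, by the sharing, not of $g^{(k)}$ either), so there actually exists at least one point $z_{0}\in\mathbb{C}$ at which $f^{(k)}(z_{0})=g^{(k)}(z_{0})=0$. At such a $z_{0}$ we have $F(z_{0})=G(z_{0})=0$.

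Substituting $z=z_{0}$ into the integrated relation $F-1=A(G-1)$ immediately gives $-1=-A$, hence $A=1$ and therefore $F\equiv G$. The essential (and only) subtlety is making sure the sharing hypothesis together with the non-Picard-exceptional assumption produces a genuine common zero; once that is in hand the argument is purely algebraic. No delicate Nevanlinna estimates are needed at this stage.
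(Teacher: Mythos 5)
Your proof is correct and follows exactly the standard route: integrate $\Phi_{1}\equiv 0$ to obtain $F-1=A(G-1)$, then use the existence of a common zero of $f^{(k)}$ and $g^{(k)}$ (guaranteed by the sharing of $(0,0)$ and the non-Picard-exceptionality of $0$) to force $A=1$. The paper states this lemma without proof (citing Banerjee--Bhattacharjee), but its proof of the companion Lemma 2.8 ($\Phi_{3}\equiv 0$) is the same integrate-and-evaluate argument, so your approach matches the intended one.
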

\begin{lem}\label{l2.6}\cite{1aa} Let $F$ and $G$ be given by (\ref{e2.1}), $n\geq 3$ be an integer and $\Phi_{1} \not\equiv 0$. If $F$, $G$ share $(1,k_{1})$; $f$, $g$ share $(\infty,k_{2})$, and $f^{(k)}$, $g^{(k)}$ share $(0,k_{3})$, where $0\leq k_{3}< \infty$ then \beas [(n-1)k_{3}+n-2]\;\ol N(r,0;f^{(k)}\mid\geq k_{3}+1)&\leq&\;\ol N_{*}(r,1;F,G)+\ol N_{*}(r,\infty;F,G)+S(r).\eeas \end{lem}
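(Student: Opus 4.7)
The plan is to extract the claimed bound from the auxiliary function $\Phi_{1}$ by computing the order of its zero at every sufficiently multiple zero of $f^{(k)}$, and then bounding $\Phi_{1}$ in the Nevanlinna sense. The essential observation is that the factor $f^{(k)}+a$ in the definition of $F$ is non-vanishing at every zero of $f^{(k)}$ (since $a\neq 0$), so a zero of $f^{(k)}$ produces a genuine zero of $F$ of predictable order, and this in turn pins down the local behaviour of $F'/(F-1)$.

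First I would analyse $\Phi_{1}$ locally at a common zero $z_{0}$ of $f^{(k)}$ and $g^{(k)}$. The $(0,k_{3})$--sharing hypothesis forces any zero of $f^{(k)}$ of order $p\geq k_{3}+1$ to be a zero of $g^{(k)}$ of some order $q\geq k_{3}+1$. From $F=(f^{(k)})^{n-1}(f^{(k)}+a)/(-b)$ it then follows that $F$ has a zero of order exactly $(n-1)p$ at $z_{0}$ and $F-1$ does not vanish there, so $F'/(F-1)$ has a zero of order $(n-1)p-1$; similarly $G'/(G-1)$ has a zero of order $(n-1)q-1$. Hence $\Phi_{1}$ vanishes to order at least $(n-1)\min(p,q)-1\geq (n-1)(k_{3}+1)-1=(n-1)k_{3}+n-2$ at $z_{0}$. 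Summing over all such points gives
$$[(n-1)k_{3}+n-2]\,\ol N(r,0;f^{(k)}\mid\geq k_{3}+1)\leq N(r,0;\Phi_{1})\leq T(r,\Phi_{1})+O(1),$$
so it remains to bound $T(r,\Phi_{1})$ from above.

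The second step is to decompose $T(r,\Phi_{1})=m(r,\Phi_{1})+N(r,\Phi_{1})$. The logarithmic-derivative lemma, applied to $F'/(F-1)$ and $G'/(G-1)$, immediately yields $m(r,\Phi_{1})=S(r)$. For $N(r,\Phi_{1})$, I would catalogue the possible poles of each logarithmic derivative: only simple poles occur, located at the $1$-points of $F,G$ (with residue equal to the multiplicity) and at the poles of $F,G$ (with residue minus the order). Since $F,G$ share $(1,k_{1})$, at every common $1$-point whose multiplicities on the two sides agree---automatic whenever those multiplicities are $\leq k_{1}$, the remaining cases being counted by $\ol N_{*}(r,1;F,G)$---the residues of $F'/(F-1)$ and $G'/(G-1)$ coincide and cancel in $\Phi_{1}$. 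A parallel argument at poles, starting from $f,g$ share $(\infty,k_{2})$ and noting that matching pole orders of $f,g$ produce matching pole orders of $F,G$, gives $N(r,\Phi_{1})\leq\ol N_{*}(r,1;F,G)+\ol N_{*}(r,\infty;F,G)$. Assembling the three displayed estimates yields the inequality of the lemma.

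The main subtlety, and really the only place where the weights $k_{1},k_{2}$ enter, is the residue-cancellation argument for the pole count of $\Phi_{1}$. One must expand $F'/(F-1)$ and $G'/(G-1)$ at shared $1$-points and at shared poles and verify that the leading residues coincide whenever the multiplicities do, so that the contribution to $N(r,\Phi_{1})$ is limited to the mismatched points captured by $\ol N_{*}(r,1;F,G)$ and $\ol N_{*}(r,\infty;F,G)$. This is routine from the local expansions, but it is the step that must be executed with care.
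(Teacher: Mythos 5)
Your argument is correct and is essentially the standard proof of this lemma (which the paper itself only cites from Banerjee--Bhattacharjee \cite{1aa} without reproving): one reads off a zero of $\Phi_{1}$ of order at least $(n-1)\min(p,q)-1\geq (n-1)k_{3}+n-2$ at each zero of $f^{(k)}$ of multiplicity $\geq k_{3}+1$ (using $a\neq 0$ so that $F$ vanishes to order exactly $(n-1)p$ and $F-1$ does not vanish there), and then bounds $N(r,0;\Phi_{1})\leq T(r,\Phi_{1})+O(1)$ via $m(r,\Phi_{1})=S(r)$ and the residue-cancellation count $N(r,\Phi_{1})\leq \ol N_{*}(r,1;F,G)+\ol N_{*}(r,\infty;F,G)$. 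The only point worth making explicit is that $m(r,\Phi_{1})=S(r,F)+S(r,G)$ becomes $S(r)$ through Mokhon'ko's theorem (Lemma \ref{l2.4}), since $T(r,F)=nT(r,f^{(k)})+S(r,f^{(k)})$; otherwise there are no gaps.
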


\begin{lem}\label{l2.8} Let $f$, $g$ be two non-constant meromorphic functions, $F$, $G$ be given by (\ref{e2.1}), $n\geq 3$ be an integer and $\Phi_{2}\not\equiv 0$. If $F$, $G$ share $(1,k_{1})$ and $f^{(k)}$, $g^{(k)}$ share $(0,k_{3})$; $f$, $g$ share $(\infty,k_{2})$, where $1\leq k_{1}\leq \infty$ then
\beas k_{1}\ol N(r,1;F|\geq k_{1}+1) \leq \ol N_{*}(r,0;f^{(k)},g^{(k)})+\ol N_{*}(r,\infty;f,g)+S(r,f^{(k)})+S(r,g^{(k)}).\eeas \end{lem}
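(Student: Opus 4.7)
The plan is to bound $\ol N(r,1;F|\geq k_{1}+1)$ by counting zeros of $\Phi_{2}$, whose size we can control via Nevanlinna's estimates. The crucial observation is that at a ``high multiplicity'' $1$-point of $F$, the auxiliary function $\Phi_{2}$ automatically vanishes to high order, because that point is a high-order fixed-value point of $f^{(k)}$.

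First I would analyse the local structure at a point $z_{0}$ which is a $1$-point of $F$ of multiplicity $m\geq k_{1}+1$. Writing $P(w)=w^{n}+aw^{n-1}+b$, we have $F-1=P(f^{(k)})/(-b)$, and since $P$ has only simple roots, there is a root $\omega_{i}$ of $P$ with $f^{(k)}(z_{0})=\omega_{i}$ and $f^{(k)}-\omega_{i}$ vanishing to order exactly $m$ at $z_{0}$. Because $F,G$ share $(1,k_{1})$ with $m\geq k_{1}+1$, $z_{0}$ is also a $1$-point of $G$ of some multiplicity $p\geq k_{1}+1$, so analogously $g^{(k)}-\omega_{j}$ vanishes to order $p$ for some root $\omega_{j}$. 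Since $b\neq 0$, neither $\omega_{i}$ nor $\omega_{j}$ is zero, so $(f^{(k)})'/f^{(k)}$ and $(g^{(k)})'/g^{(k)}$ are holomorphic at $z_{0}$ and vanish there to orders $m-1$ and $p-1$, both $\geq k_{1}$. Consequently $\Phi_{2}$ has a zero of order at least $k_{1}$ at $z_{0}$, and since $\Phi_{2}\not\equiv 0$ by assumption, this gives
$$k_{1}\,\ol N(r,1;F|\geq k_{1}+1)\leq N(r,0;\Phi_{2})\leq T(r,\Phi_{2})+O(1).$$

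Next I would bound $T(r,\Phi_{2})=N(r,\Phi_{2})+m(r,\Phi_{2})$. By the lemma on the logarithmic derivative,
$$m(r,\Phi_{2})\leq m\!\left(r,\tfrac{(f^{(k)})'}{f^{(k)}}\right)+m\!\left(r,\tfrac{(g^{(k)})'}{g^{(k)}}\right)+O(1)=S(r,f^{(k)})+S(r,g^{(k)}).$$
For the poles of $\Phi_{2}$, the only candidates are zeros and poles of $f^{(k)}$ and $g^{(k)}$. Since $f^{(k)},g^{(k)}$ share $(0,k_{3})$ they share zeros IM, and at a common zero with multiplicities $\mu,\nu$ the simple-pole residues of the two logarithmic derivatives are $\mu$ and $\nu$; they cancel exactly when $\mu=\nu$, giving a contribution of $\ol N_{*}(r,0;f^{(k)},g^{(k)})$. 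Similarly, since $f,g$ share $(\infty,k_{2})$ they share poles IM, and a common pole of $f,g$ with multiplicities $s,t$ produces poles of $f^{(k)},g^{(k)}$ of multiplicities $s+k,t+k$, so the residue cancellation in $\Phi_{2}$ occurs precisely when $s=t$. The uncanceled contribution is therefore $\ol N_{*}(r,\infty;f,g)$. Hence
$$N(r,\Phi_{2})\leq \ol N_{*}(r,0;f^{(k)},g^{(k)})+\ol N_{*}(r,\infty;f,g).$$

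Combining these two estimates with the zero-counting inequality produces exactly the desired bound. The argument contains no real obstacle; the only point requiring care is the multiplicity computation at $z_{0}$ (for which we need $\omega_{i},\omega_{j}\neq 0$, which follows from $b\neq 0$) and the bookkeeping that converts $\ol N_{*}(r,\infty;f^{(k)},g^{(k)})$ into $\ol N_{*}(r,\infty;f,g)$ via the observation that the multiplicities of the poles of $f^{(k)},g^{(k)}$ differ precisely when those of $f,g$ do.
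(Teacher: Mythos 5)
Your proposal is correct and follows essentially the same route as the paper: the paper's proof is exactly the chain $k_{1}\ol N(r,1;F|\geq k_{1}+1)\leq N(r,0;\Phi_{2})\leq N(r,\Phi_{2})+S(r)\leq \ol N_{*}(r,0;f^{(k)},g^{(k)})+\ol N_{*}(r,\infty;f,g)+S(r)$, stated without justification, and you have supplied the correct local multiplicity analysis (using that the roots of $z^{n}+az^{n-1}+b$ are simple and nonzero) and the correct pole bookkeeping for $\Phi_{2}$ that make each step valid.
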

\begin{proof} Note that \beas& & k_{1}\;\ol N(r,1;F|\geq k_{1}+1)\\&\leq& N(r,0;\Phi_{2})\\&\leq& N(r,\Phi_{2})+S(r,f^{(k)})+S(r,g^{(k)})\\&\leq&  \ol N_{*}(r,0;f^{(k)},g^{(k)})+\ol N_{*}(r,\infty;f,g)+S(r,f^{(k)})+S(r,g^{(k)}).\eeas  \end{proof}
\begin{lem}\label{l2.9} Let $f$, $g$ be two non-constant meromorphic functions. Also let $F$, $G$ be given by (\ref{e2.1}), $n\geq 3$ an integer and $\Phi_{1}\not\equiv 0$,$\Phi_{2}\not\equiv 0$. If $F$, $G$ share $(1,k_{1})$, where $k_{1}\geq 2$, $f^{(k)}$, $g^{(k)}$ share $(0,k_{3})$ and $f$,  $g$ share $(\infty,k_{2})$, $0\leq k_{2}\leq \infty$ then
\beas \ol N(r,0;f^{(k)}|\geq k_{3}+1) \leq \frac{k_{1}+1}{k_{1}[(n-1)k_{3}+(n-2)]-1}\ol N_{*}(r,\infty;f,g)+S(r,f^{(k)})+S(r,g^{(k)}),\eeas Similar result holds for $g^{(k)}$.\end{lem}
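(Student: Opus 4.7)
The plan is to bundle together Lemma~\ref{l2.6} and Lemma~\ref{l2.8} with a comparison of the reduced counting functions coming from the weighted sharing hypotheses. Concretely, I will bound $\ol N(r,0;f^{(k)}|\geq k_3+1)$ from above by Lemma~\ref{l2.6}, then push each of the two terms on the right-hand side back to expressions in $\ol N(r,0;f^{(k)}|\geq k_3+1)$ and $\ol N_*(r,\infty;f,g)$, and finally solve.

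First I apply Lemma~\ref{l2.6} to obtain
\beas [(n-1)k_{3}+n-2]\;\ol N(r,0;f^{(k)}\mid\geq k_{3}+1)\leq \ol N_{*}(r,1;F,G)+\ol N_{*}(r,\infty;F,G)+S(r). \eeas
Next I handle the two starred terms on the right. Since $f$ and $g$ share $(\infty,k_{2})$ (in particular IM), each pole of $F$ is a pole of $G$ of the same order iff the corresponding poles of $f,g$ have the same order, so $\ol N_{*}(r,\infty;F,G)\leq \ol N_{*}(r,\infty;f,g)$. Because $F,G$ share $(1,k_{1})$, every $1$-point contributing to $\ol N_{*}(r,1;F,G)$ must have multiplicity $\geq k_{1}+1$ in both $F$ and $G$, so
\beas \ol N_{*}(r,1;F,G)\leq \ol N(r,1;F\mid\geq k_{1}+1). \eeas

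Now I invoke Lemma~\ref{l2.8} (which uses $\Phi_{2}\not\equiv 0$) to control $\ol N(r,1;F\mid\geq k_{1}+1)$ by $\ol N_{*}(r,0;f^{(k)},g^{(k)})+\ol N_{*}(r,\infty;f,g)$ up to $S(r)$. Since $f^{(k)},g^{(k)}$ share $(0,k_{3})$, the same argument as before shows $\ol N_{*}(r,0;f^{(k)},g^{(k)})\leq \ol N(r,0;f^{(k)}\mid\geq k_{3}+1)$. Combining,
\beas \ol N_{*}(r,1;F,G)\leq \frac{1}{k_{1}}\left[\ol N(r,0;f^{(k)}\mid\geq k_{3}+1)+\ol N_{*}(r,\infty;f,g)\right]+S(r,f^{(k)})+S(r,g^{(k)}). \eeas

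Substituting these two estimates back into the bound from Lemma~\ref{l2.6} gives
\beas \left[(n-1)k_{3}+n-2-\frac{1}{k_{1}}\right]\ol N(r,0;f^{(k)}\mid\geq k_{3}+1)\leq \left(1+\frac{1}{k_{1}}\right)\ol N_{*}(r,\infty;f,g)+S(r,f^{(k)})+S(r,g^{(k)}). \eeas
Multiplying through by $k_{1}$ and dividing by $k_{1}[(n-1)k_{3}+n-2]-1$ (which is positive for $n\geq 3$, $k_{1}\geq 2$) yields the stated inequality. The symmetric bound for $g^{(k)}$ follows by interchanging the roles of $f$ and $g$. The only mildly delicate point is the comparison $\ol N_{*}(r,\infty;F,G)\leq \ol N_{*}(r,\infty;f,g)$, which uses the fact that the exponents $n(m+k)$ determine $m$ uniquely given $n,k$; everything else is bookkeeping with the weighted sharing conventions, so I expect no real obstacle.
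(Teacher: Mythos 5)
Your proposal is correct and follows essentially the same route as the paper: apply Lemma~\ref{l2.6}, bound $\ol N_{*}(r,1;F,G)$ by $\ol N(r,1;F\mid\geq k_{1}+1)$ and invoke Lemma~\ref{l2.8} together with $\ol N_{*}(r,0;f^{(k)},g^{(k)})\leq \ol N(r,0;f^{(k)}\mid\geq k_{3}+1)$, then solve the resulting linear inequality. You are merely more explicit than the paper about the intermediate comparisons $\ol N_{*}(r,\infty;F,G)\leq \ol N_{*}(r,\infty;f,g)$ and the positivity of the denominator, which the paper leaves implicit.
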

\begin {proof} Using {\it Lemma \ref{l2.6}} and {\it Lemma \ref{l2.8}} and noting that \\ $\ol N_{*}(r,0;f^{(k)},g^{(k)})\leq \ol N(r,0;f^{(k)}|\geq k_{3}+1)$ we see that \beas & &[(n-1)k_{3}+(n-2)]\ol N(r,0;f^{(k)}|\geq k_{3}+1)\\&\leq&\ol N_{*}(r,1;F,G)+\ol N_{*}(r,\infty;f,g)+S(r,f^{(k)})+S(r,g^{(k)})\\&\leq& \frac{1}{k_{1}}\ol N(r,0;f^{(k)}|\geq k_{3}+1)+\frac{k_{1}+1}{k_{1}}\ol N_{*}(r,\infty;f,g)\nonumber\\ & &+S(r,f^{(k)})+S(r,g^{(k)}),\eeas
from which the lemma follows.\end{proof}
\begin{lem}\label{l2.9a} Let $f$ and $g$ be two non-constant meromorphic functions. Suppose $f$, $g$ share $(\infty,0)$ and $\infty$ is not an Picard exceptional value of $f$ and $g$.  Then $\Phi_{3}\equiv 0$  implies $f^{(k)}\equiv \frac{\omega_{i}}{\omega_{j}}g^{(k)}$.\end{lem}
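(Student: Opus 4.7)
The plan is to integrate the identity $\Phi_{3}\equiv 0$. Each of the two bracketed differences is a logarithmic derivative:
$$\frac{(f^{(k)})'}{f^{(k)}-\omega_{i}}-\frac{(f^{(k)})'}{f^{(k)}}=\frac{d}{dz}\log\frac{f^{(k)}-\omega_{i}}{f^{(k)}},$$
and analogously for the $g$-term. Hence $\Phi_{3}\equiv 0$ integrates to
$$1-\frac{\omega_{i}}{f^{(k)}}\equiv C\left(1-\frac{\omega_{j}}{g^{(k)}}\right)$$
for some nonzero constant $C$. The remainder of the proof consists of identifying $C=1$ and then rearranging.

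To pin $C$ down I would use the two hypotheses together. Since $f,g$ share $(\infty,0)$, the pole sets of $f$ and $g$ coincide as sets; combined with the assumption that $\infty$ is not a Picard exceptional value of $f$ (equivalently of $g$), there exists a common pole $z_{0}\in\mathbb{C}$ of $f$ and $g$. Then $z_{0}$ is a pole of both $f^{(k)}$ and $g^{(k)}$, so the meromorphic functions $\omega_{i}/f^{(k)}$ and $\omega_{j}/g^{(k)}$ both vanish at $z_{0}$ (their singularities are removable there with value $0$). Evaluating the displayed identity at $z_{0}$ therefore forces $1=C$.

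Substituting $C=1$ yields $\omega_{i}/f^{(k)}\equiv \omega_{j}/g^{(k)}$. Since $b\neq 0$ in the defining polynomial $z^{n}+az^{n-1}+b$ (see the statement of Theorem B), the value $z=0$ is not a root, so $\omega_{i}\omega_{j}\neq 0$; consequently we may rearrange to obtain $f^{(k)}\equiv (\omega_{i}/\omega_{j})\,g^{(k)}$, as required.

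The only genuinely delicate point is justifying the evaluation at a common pole, and this is what the hypothesis that $\infty$ is not Picard exceptional (together with the IM sharing of $\infty$) is tailored to provide; everything else is a short direct computation.
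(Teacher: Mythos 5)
Your proposal is correct and follows essentially the same route as the paper: integrate $\Phi_{3}\equiv 0$ to get $1-\omega_{i}/f^{(k)}\equiv A\bigl(1-\omega_{j}/g^{(k)}\bigr)$, then use the shared poles to force $A=1$. You merely make explicit the step the paper leaves implicit, namely that a common pole (which exists because $\infty$ is not Picard exceptional and is shared IM) is a removable singularity of both sides with value $1$, which pins down the constant.
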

\begin{proof} Suppose $\Phi_{3}\equiv 0$. Then by integration we obtain $$1-\frac{\omega_{i}}{f^{(k)}}\equiv A(1-\frac{\omega_{j}}{g^{(k)}}),$$ where $A\not=0$. Since $f$, $g$ share $(\infty,0)$ it follows that $A=1$ and hence $f^{(k)}\equiv \frac{\omega_{i}}{\omega_{j}}g^{(k)}$. \end {proof}
\begin{lem}\label{l2.10} Let $f$ and $g$ be two non-constant meromorphic functions and $\Phi_{3}\not\equiv 0$. Also let $F$ and $G$ be given by (\ref{e2.1}). If $f^{(k)}$, $g^{(k)}$ share $(0,k_{3})$; $f$ and $g$ share $(\infty,k_{2})$, where $1\leq k_{2}\leq \infty$ and $E_{f^{(k)}}(S_{1},k_{1})=E_{g^{(k)}}(S_{1},k_{1})$, where $S_{1}$ is the same set as used in the {\it Theorem 1.1} and $1\leq k_{1}\leq \infty$ then \beas & & (k_{2}+k)\;\;\ol N(r,\infty;f\mid\geq k_{2}+1)\\&\leq& \ol N_{*}\left(r,0;f^{(k)},g^{(k)}\right)+\ol N_{*}(r,1;F,G)+S(r).\eeas
Similar expressions hold for $g^{(k)}$ also. \end{lem}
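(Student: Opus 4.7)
The plan follows the template of Lemmas \ref{l2.8}--\ref{l2.9}: bound the left-hand side by $N(r,0;\Phi_3)$ via a local expansion argument at the poles of $f$, pass to $T(r,\Phi_3)=N(r,\Phi_3)+m(r,\Phi_3)+O(1)$, control $m(r,\Phi_3)=S(r)$ by the lemma on the logarithmic derivative, and then count the actual poles of $\Phi_3$.

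The key identity is
$$\frac{(f^{(k)})'}{f^{(k)}-\omega_i}-\frac{(f^{(k)})'}{f^{(k)}}=\frac{\omega_i(f^{(k)})'}{f^{(k)}(f^{(k)}-\omega_i)},$$
which displays $\Phi_3$ as a difference of two logarithmic derivatives, so $m(r,\Phi_3)=S(r)$. Take a pole $z_0$ of $f$ of multiplicity $p\geq k_2+1$. Since $f,g$ share $(\infty,k_2)$, the point $z_0$ is also a pole of $g$ of some multiplicity $q\geq k_2+1$, and so $f^{(k)},g^{(k)}$ have poles at $z_0$ of orders $p+k$ and $q+k$ respectively. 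A direct Laurent expansion shows that the $f$-block $\omega_i(f^{(k)})'/[f^{(k)}(f^{(k)}-\omega_i)]$ vanishes at $z_0$ to order exactly $p+k-1\geq k_2+k$, and likewise the $g$-block to order $q+k-1\geq k_2+k$; hence $\Phi_3$ vanishes at $z_0$ to order at least $k_2+k$. Summing over such poles yields
$$(k_2+k)\,\ol N(r,\infty;f\mid\geq k_2+1)\leq N(r,0;\Phi_3)\leq T(r,\Phi_3)+O(1)=N(r,\Phi_3)+S(r).$$

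It remains to bound $N(r,\Phi_3)$. Every pole of $\Phi_3$ is simple, and poles of $f^{(k)}$ (equivalently, of $g^{(k)}$) are \emph{not} poles of $\Phi_3$, since at such a point the two fractions $(f^{(k)})'/(f^{(k)}-\omega_i)$ and $(f^{(k)})'/f^{(k)}$ have identical principal parts and cancel. At a common zero of $f^{(k)}$ and $g^{(k)}$, the residues sum to zero precisely when the multiplicities agree, yielding the contribution $\ol N_*(r,0;f^{(k)},g^{(k)})$. At a common $\omega_i$-point of $f^{(k)}$ and $\omega_j$-point of $g^{(k)}$, the local multiplicities of $F-1$ and $G-1$ coincide with those of $f^{(k)}-\omega_i$ and $g^{(k)}-\omega_j$, so after cancellation the residual contribution is absorbed into $\ol N_*(r,1;F,G)$. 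The symmetric bound for $g^{(k)}$ is obtained by interchanging $f\leftrightarrow g$ and $\omega_i\leftrightarrow\omega_j$.

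The main obstacle is the bookkeeping for the ``mixed'' points, where $f^{(k)}(z_0)=\omega_i$ but $g^{(k)}(z_0)=\omega_\ell$ for some $\omega_\ell\neq\omega_j$ (and the symmetric configuration). These produce simple poles of $\Phi_3$ that are not \emph{a priori} recorded in $\ol N_*(r,1;F,G)$. They have to be controlled via the shared-set condition $E_{f^{(k)}}(S_1,k_1)=E_{g^{(k)}}(S_1,k_1)$ (with $k_1\geq 1$): such points are $1$-points of both $F$ and $G$, so that matching of multiplicities---via the refined counting of Definition \ref{d1.10}---allows any residual contribution to be placed either inside $\ol N_*(r,1;F,G)$ or into the $S(r)$ error. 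This case analysis is the most delicate part of the argument and is where I would concentrate the most care.
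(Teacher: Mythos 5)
Your proposal follows essentially the same route as the paper: the paper likewise bounds $(k_{2}+k)\,\ol N(r,\infty;f\mid\geq k_{2}+1)$ by $N(r,0;\Phi_{3})\leq N(r,\Phi_{3})+S(r)$ (using that $\Phi_{3}$ is built from logarithmic derivatives and vanishes to order at least $k_{2}+k$ at the shared high-order poles) and then counts the poles of $\Phi_{3}$ by $\ol N_{*}(r,0;f^{(k)},g^{(k)})+\ol N_{*}(r,\omega_{i};f^{(k)}|g^{(k)}=\omega_{j})$ together with the inequality $\ol N_{*}(r,\omega_{i};f^{(k)}|g^{(k)}=\omega_{j})\leq \ol N_{*}(r,1;F,G)$. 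The one step you flag as delicate --- the mixed points where $f^{(k)}=\omega_{i}$ but $g^{(k)}=\omega_{\ell}\neq\omega_{j}$ --- is exactly the step the paper disposes of in a single line by appealing to Definition \ref{d1.10}, so your instinct to concentrate care there is well placed rather than a defect of your approach.
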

\begin{proof} If $\infty$ is an e.v.P of $f^{(k)}$ and $g^{(k)}$ then the assertion follows immediately.\\
Next suppose $\infty$ is not an e.v.P of $f^{(k)}$ and $g^{(k)}$. Since $E_{f^{(k)}}(S_{1},k_{1})=E_{g^{(k)}}(S_{1},k_{1})$, it follows that $\ol N_{*}(r,\omega_{i};f^{(k)}|g^{(k)}=\omega_{j})\leq \ol N_{*}(r,1;F,G)$. Note that \beas& & (k_{2}+k)\;\;\ol N(r,\infty;f|\geq k_{2}+1)\\&=& (k_{2}+k)\;\ol N(r,\infty;g|\geq k_{2}+1)\\&\leq& N(r,0;\Phi_{3})\\&\leq& N(r,\Phi_{3})+S(r,f^{(k)})+S(r,g^{(k)})\\&\leq&  \ol N_{*}(r,0;f^{(k)},g^{(k)})+\ol N_{*}(r,\omega_{i};f^{(k)}|g^{(k)}=\omega_{j})+S(r,f^{(k)})+S(r,g^{(k)})\\&\leq&  \ol N_{*}(r,0;f^{(k)},g^{(k)})+\ol N_{*}(r,1;F,G)+S(r).\eeas \end{proof}
\begin{lem}\label{l2.11} Let $f$, $g$ be two non-constant meromorphic functions and $\Phi_{2}\not\equiv 0$,  $\Phi_{3}\not\equiv 0$. Also let $F$ and $G$ be given by (\ref{e2.1}).If $f^{(k)}$, $g^{(k)}$ share $(0,k_{3})$; $f$ and $g$ share $(\infty,k_{2})$, where $0\leq k_{2}<\infty$ and $F$, $G$ share $(1,k_{1})$, where $k_{1}> 1$ then \beas & &  \;\ol N(r,\infty;f\mid\geq k_{2}+1)\\&\leq& \frac{k_{1}+1}{k_{1}(k_{2}+k)-1}\ol N_{*}\left(r,0;f^{(k)},g^{(k)}\right)+S(r).\eeas
Similar expressions hold for $g^{(k)}$ also. \end{lem}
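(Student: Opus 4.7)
The plan is to combine Lemma \ref{l2.10} and Lemma \ref{l2.8} in a manner entirely parallel to how Lemma \ref{l2.9} is obtained from Lemma \ref{l2.6} and Lemma \ref{l2.8}. The roles of the counting functions $\ol N(r,0;f^{(k)}\mid\geq k_{3}+1)$ and $\ol N(r,\infty;f\mid\geq k_{2}+1)$ simply swap places here.

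First, I would invoke Lemma \ref{l2.10}, which under the present hypotheses ($\Phi_{3}\not\equiv 0$, $F,G$ share $(1,k_{1})$ with $k_{1}\geq 1$, and $f,g$ share $(\infty,k_{2})$ with $k_{2}\geq 1$) gives
\beas
(k_{2}+k)\,\ol N(r,\infty;f\mid\geq k_{2}+1)
\leq \ol N_{*}(r,0;f^{(k)},g^{(k)})+\ol N_{*}(r,1;F,G)+S(r).
\eeas
Thus the whole task reduces to bounding $\ol N_{*}(r,1;F,G)$ in terms of $\ol N_{*}(r,0;f^{(k)},g^{(k)})$ and $\ol N(r,\infty;f\mid\geq k_{2}+1)$.

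For that, I would apply Lemma \ref{l2.8} (valid because $\Phi_{2}\not\equiv 0$ and $k_{1}\geq 1$). Since $F$ and $G$ share $(1,k_{1})$, every $1$-point contributing to $\ol N_{*}(r,1;F,G)$ must have multiplicity exceeding $k_{1}$ on at least one side, so $\ol N_{*}(r,1;F,G)\leq \ol N(r,1;F\mid\geq k_{1}+1)$. Similarly, since $f$ and $g$ share $(\infty,k_{2})$, we have $\ol N_{*}(r,\infty;f,g)\leq \ol N(r,\infty;f\mid\geq k_{2}+1)$. Combining these with Lemma \ref{l2.8} yields
\beas
k_{1}\,\ol N_{*}(r,1;F,G)\leq \ol N_{*}(r,0;f^{(k)},g^{(k)})+\ol N(r,\infty;f\mid\geq k_{2}+1)+S(r).
\eeas

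Finally, I would substitute this estimate into the inequality from Lemma \ref{l2.10} to obtain
\beas
(k_{2}+k)\,\ol N(r,\infty;f\mid\geq k_{2}+1)
&\leq& \frac{k_{1}+1}{k_{1}}\,\ol N_{*}(r,0;f^{(k)},g^{(k)})\\
& &+\,\frac{1}{k_{1}}\,\ol N(r,\infty;f\mid\geq k_{2}+1)+S(r),
\eeas
and then transpose the term $\frac{1}{k_{1}}\ol N(r,\infty;f\mid\geq k_{2}+1)$ to the left. The hypothesis $k_{1}>1$ (together with $k_{2}\geq 0$, $k\geq 1$) ensures $k_{1}(k_{2}+k)-1>0$, so dividing through by this positive quantity produces the claimed inequality. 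The argument for $g^{(k)}$ is the same by symmetry.

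There is no serious obstacle here; the only point that requires a little care is verifying that $k_{1}(k_{2}+k)-1>0$ so that the final division is legitimate, and that the sharing hypotheses allow us to bound $\ol N_{*}(r,1;F,G)$ and $\ol N_{*}(r,\infty;f,g)$ by the reduced counting functions of high-multiplicity $1$-points and poles, respectively.
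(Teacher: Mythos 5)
Your proposal is correct and follows essentially the same route as the paper: the paper likewise combines Lemma \ref{l2.10} with Lemma \ref{l2.8}, using the observations $\ol N_{*}(r,1;F,G)\leq \ol N(r,1;F\mid\geq k_{1}+1)$ and $\ol N_{*}(r,\infty;f,g)\leq \ol N(r,\infty;f\mid\geq k_{2}+1)$, and then transposes the $\frac{1}{k_{1}}\ol N(r,\infty;f\mid\geq k_{2}+1)$ term exactly as you do. The only cosmetic difference is that the paper does not spell out the intermediate bound on $\ol N_{*}(r,1;F,G)$ as a separate displayed step.
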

\begin {proof} Using {\it Lemma \ref{l2.8}} and {\it Lemma \ref{l2.10}} and noting that $\ol N_{*}(r,\infty;f,g)\leq \ol N(r,\infty;f|\geq k_{2}+1)$ we see that \beas (k_{2}+k)\ol N(r,\infty;f|\geq k_{2}+1)&\leq&\ol N_{*}(r,1;F,G)+\ol N_{*}(r,0;f^{(k)},g^{(k)})+S(r)\\&\leq& \frac{1}{k_{1}}\ol N(r,\infty;f|\geq k_{2}+1)+\frac{k_{1}+1}{k_{1}}\ol N_{*}(r,0;f^{(k)},g^{(k)})\\ & &+S(r,f^{(k)})+S(r,g^{(k)}),\eeas
from which the lemma follows.\end{proof}
\begin{lem}\label{l2.12aa}\cite{1aa}  Let $F$, $G$ be given by (\ref{e2.1}) and $H\not\equiv 0$. If $f^{(k)}$, $g^{(k)}$ share $(0,k_{3})$; $f$ and $g$ share $(\infty,k_{2})$, where $0\leq k_{2}<\infty$ and $F$, $G$ share $(1,k_{1})$, where $1\leq k_{1}\leq \infty$ then \beas & & \{(nk_{2}+nk+n)-1\} \;\ol N(r,\infty;f\mid\geq k_{2}+1)\\&\leq& \ol N_{*}\left(r,0;f^{(k)},g^{(k)}\right)+\ol N\left(r,0;f^{(k)}+a\right)+\ol N\left(r,0;g^{(k)}+a\right)+\ol N_{*}(r,1;F,G)+S(r).\eeas
Similar expressions hold for $g$ also. \end{lem}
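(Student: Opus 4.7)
The plan is to introduce the auxiliary function
\[
\Psi := \frac{F'}{F(F-1)} - \frac{G'}{G(G-1)} = \left(\frac{F'}{F-1} - \frac{F'}{F}\right) - \left(\frac{G'}{G-1} - \frac{G'}{G}\right),
\]
which, being a difference of logarithmic derivatives of $F-1,F,G-1,G$, satisfies $m(r,\Psi)=S(r)$ by the Lemma on Logarithmic Derivatives. The key local observation is that $F'/[F(F-1)]=\bigl(\log(1-1/F)\bigr)'$ vanishes to order exactly $m-1$ at any pole of $F$ of order $m$: a short Laurent expansion at $F=c(z-z_{0})^{-m}+\cdots$ gives $F'/[F(F-1)]=-\tfrac{m}{c}(z-z_{0})^{m-1}+O((z-z_{0})^{m})$.

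First I would verify $\Psi\not\equiv 0$. If $\Psi\equiv 0$, integration gives $(F-1)/F\equiv A(G-1)/G$ for some nonzero constant $A$, and solving for $G$ yields either $G\equiv F$ (when $A=1$) or $G\equiv AF/(1-(1-A)F)$. A direct substitution into the definition of $H$ then forces $H\equiv 0$, contradicting the hypothesis, so $\Psi\not\equiv 0$.

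At a pole $z_{0}$ of $f$ of multiplicity $p\geq k_{2}+1$, the sharing $(\infty,k_{2})$ forces $z_{0}$ to be a pole of $g$ of multiplicity $q\geq k_{2}+1$, so $F,G$ have poles at $z_{0}$ of orders $m=n(p+k)$ and $m'=n(q+k)$, each at least $n(k_{2}+k+1)$. By the local expansion, $\Psi$ vanishes at $z_{0}$ to order at least $\min(m,m')-1\geq nk_{2}+nk+n-1$. Summing over all such poles,
\[
(nk_{2}+nk+n-1)\,\ol N(r,\infty;f\mid\geq k_{2}+1)\leq N(r,0;\Psi)\leq T(r,\Psi)+O(1)=N(r,\Psi)+S(r).
\]

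The remaining work is to bound $N(r,\Psi)$ by analyzing its (simple) poles, which lie only at zeros and $1$-points of $F,G$. The $(1,k_{1})$-sharing with $k_{1}\geq 1$ causes residues to cancel at every common $1$-point of matching multiplicity, leaving at most $\ol N_{*}(r,1;F,G)$. At common zeros of $F,G$ coming from the (IM) shared zeros of $f^{(k)}$ and $g^{(k)}$, residues cancel unless multiplicities differ, contributing at most $\ol N_{*}(r,0;f^{(k)},g^{(k)})$. The remaining zeros of $F$ and $G$ arise from $f^{(k)}+a=0$ and $g^{(k)}+a=0$ (which cannot coincide with the zeros of the other $k$-th derivative since $0\neq -a$), giving at most $\ol N(r,0;f^{(k)}+a)+\ol N(r,0;g^{(k)}+a)$. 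Assembling these estimates yields the claimed inequality. The main technical obstacle is the implication $\Psi\equiv 0\Rightarrow H\equiv 0$, whose verification requires the explicit M\"{o}bius-transform computation sketched above.
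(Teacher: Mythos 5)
Your proof is correct and follows essentially the standard route: the paper itself omits the proof (the lemma is imported from \cite{1aa}), and the argument there rests on exactly the auxiliary function $V=\bigl(\tfrac{F'}{F-1}-\tfrac{F'}{F}\bigr)-\bigl(\tfrac{G'}{G-1}-\tfrac{G'}{G}\bigr)$ that you call $\Psi$, with the same three ingredients: the order-$(m-1)$ vanishing at poles of $F$ of order $m=n(p+k)$, the residue cancellation at shared zeros and $1$-points, and the non-vanishing of $\Psi$ deduced from $H\not\equiv 0$ via the M\"{o}bius relation $\tfrac{1}{F-1}=\tfrac{1}{A}\cdot\tfrac{1}{G-1}+\tfrac{1-A}{A}$. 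No gaps.
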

\begin{lem}\label{l2.14} Let $f$ ,$g$ be two non-constant meromorphic functions. Also let $F$, $G$ be given by (\ref{e2.1}),$n\geq 3$ an integer and $\Phi_{1}\not\equiv 0$, $\Phi_{2}\not\equiv 0$ and $\Phi_{3}\not\equiv 0$. If $F$, $G$ share $(1,k_{1})$; $f^{(k)}$,  $g^{(k)}$ share $(0,k_{3})$ and $f$, $g$ share $(\infty,k_{2})$, where $k_{1}>1$, $k_{2}\geq 0$ and $k_{3}\geq 0$ are integers satisfying $$ 2k_{1}k_{2}k_{3}>k_{1}+k_{2}+2k_{3}+k-2kk_{1}k_{3}-k_{1}k_{2}-kk_{1}+3,$$ then \beas \ol N(r,1;F|\geq k_{1}+1)+\ol N(r,\infty;f|\geq k_{2}+1)+\ol N(r,0;f^{(k)}|\geq k_{3}+1)=S(r).\eeas \end{lem}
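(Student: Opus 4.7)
The plan is to combine three of the previously established inequalities (Lemmas \ref{l2.8}, \ref{l2.9}, \ref{l2.11}) into a self-bounding chain on the three quantities to be estimated and show that the arithmetic hypothesis on $k_1,k_2,k_3$ is precisely what makes that chain collapse into $S(r)$. For brevity I would set
\[
A=\ol N(r,1;F\mid\geq k_1+1),\quad B=\ol N(r,\infty;f\mid\geq k_2+1),\quad C=\ol N(r,0;f^{(k)}\mid\geq k_3+1).
\]
Because $f^{(k)},g^{(k)}$ share $(0,k_3)$ and $f,g$ share $(\infty,k_2)$, I would first observe
$\ol N_{*}(r,0;f^{(k)},g^{(k)})\leq C$ and $\ol N_{*}(r,\infty;f,g)\leq B$, which lets me translate all starred counting functions into $B$ or $C$.

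Next I would apply Lemma \ref{l2.11} to get $B\leq \tfrac{k_1+1}{k_1(k_2+k)-1}\,C+S(r)$, and Lemma \ref{l2.9} (together with $n\geq 3$, so that $(n-1)k_3+(n-2)\geq 2k_3+1$) to get $C\leq \tfrac{k_1+1}{k_1(2k_3+1)-1}\,B+S(r)$. Substituting the first into the second produces
\[
C\;\leq\;\frac{(k_1+1)^2}{\bigl(k_1(2k_3+1)-1\bigr)\bigl(k_1(k_2+k)-1\bigr)}\;C+S(r).
\]
The decisive step is to verify that the hypothesis
$2k_1k_2k_3>k_1+k_2+2k_3+k-2kk_1k_3-k_1k_2-kk_1+3$
is algebraically equivalent to $(k_1+1)^2<\bigl(k_1(2k_3+1)-1\bigr)\bigl(k_1(k_2+k)-1\bigr)$. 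This reduces to expanding both sides and collecting: the difference $\bigl(k_1(2k_3+1)-1\bigr)\bigl(k_1(k_2+k)-1\bigr)-(k_1+1)^2$ factors as $k_1$ times exactly the quantity appearing in the hypothesis, and since $k_1\geq 2>0$, the two inequalities are interchangeable.

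Once this equivalence is in place, the coefficient of $C$ on the right of the displayed inequality is strictly smaller than $1$, forcing $C=S(r)$. Back-substitution into the bound from Lemma \ref{l2.11} gives $B=S(r)$, and finally Lemma \ref{l2.8} delivers
\[
k_1 A\;\leq\;\ol N_{*}(r,0;f^{(k)},g^{(k)})+\ol N_{*}(r,\infty;f,g)+S(r)\;\leq\;C+B+S(r)=S(r),
\]
whence $A=S(r)$, completing the proof. The only genuinely delicate point is the algebraic equivalence between the growth condition and the sharp inequality $(k_1+1)^2<(k_1(2k_3+1)-1)(k_1(k_2+k)-1)$; the hypothesis has evidently been engineered so that in the borderline case $n=3$ this is tight, which is why no slack can be lost in the preceding chain.
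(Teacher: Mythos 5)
Your proof is correct and follows essentially the same route as the paper: the paper records the three basic inequalities coming from $\Phi_{1},\Phi_{2},\Phi_{3}\not\equiv 0$ (via Lemmas \ref{l2.6}, \ref{l2.8} and \ref{l2.10}) and then defers the elimination to the procedure of Yi, whereas you carry out that elimination explicitly through the packaged Lemmas \ref{l2.9} and \ref{l2.11} and check that $\bigl(k_{1}(2k_{3}+1)-1\bigr)\bigl(k_{1}(k_{2}+k)-1\bigr)-(k_{1}+1)^{2}$ equals $k_{1}$ times exactly the quantity in the hypothesis. In effect your write-up supplies precisely the computation the paper omits, and the algebraic equivalence you assert does hold.
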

\begin {proof} Since $\Phi_{1}\not\equiv 0$ we get from the {\it Lemma \ref{l2.6}} we get \beas (2k_{3}+1)\;\ol N(r,0;f^{(k)}|\geq k_{3}+1)&\leq&\ol N(r,1;F\mid\geq k_{1}+1)+\ol N(r,\infty;f|\geq k_{2}+1)+S(r).\eeas
Again since $\Phi_{2}\not\equiv 0$ and $\Phi_{3}\not\equiv 0$ we get by {\it Lemmas \ref{l2.8}}, {\ref{l2.10}} respectively \beas k_{1}\;\ol N(r,1;F|\geq k_{1}+1)&\leq&\ol N(r,0;f^{(k)}\mid\geq k_{3}+1)+\ol N(r,\infty;f|\geq k_{2}+1)+S(r),\eeas
and \beas (k_{2}+k)\;\;\ol N(r,\infty;f|\geq k_{2}+1)&\leq&\ol N(r,1;F\mid\geq k_{1}+1)+\ol N(r,0;f^{(k)}|\geq k_{3}+1)+S(r).\eeas

Using the above inequalities and following the same procedure as done in {\it Lemma 2.6} \cite{20a} the rest of the lemma can be proved. So we omit the details.\end{proof}
\begin{lem}\label{l2.15}\cite{11} If $N(r,0;f^{(k)}\vline f\not=0)$ denotes the counting function of those zeros of $f^{(k)}$ which are not the zeros of $f$, where a zero of $f^{(k)}$ is counted according to its multiplicity then $$N(r,0;f^{(k)}\vline f\not=0)\leq k\ol N(r,\infty;f)+N(r,0;f\vline <k)+k\ol N(r,0;f\vline\geq k)+S(r,f).$$\end{lem}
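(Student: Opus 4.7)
The plan is to deduce the bound from the First Fundamental Theorem applied to $f^{(k)}/f$, combined with the Lemma on the Logarithmic Derivative and a local analysis of the poles of $f^{(k)}/f$ at the poles and zeros of $f$.

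First, since $f^{(k)}/f=\prod_{j=0}^{k-1}\bigl(f^{(j+1)}/f^{(j)}\bigr)$ telescopes into a product of logarithmic derivatives, the Lemma on the Logarithmic Derivative gives $m(r,f^{(k)}/f)=S(r,f)$. Using $T(r,f/f^{(k)})=T(r,f^{(k)}/f)+O(1)$ and expanding both sides as $T=m+N$, then dropping the nonnegative term $m(r,f/f^{(k)})$, we obtain
$$N(r,\infty;f/f^{(k)})\leq N(r,\infty;f^{(k)}/f)+S(r,f).$$
Moreover, the poles of $f/f^{(k)}$ arise exactly from zeros of $f^{(k)}$ that are not cancelled by zeros of $f$, so the left-hand side equals $N(r,0;f^{(k)}\mid f\neq 0)$.

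Next, I would bound $N(r,\infty;f^{(k)}/f)$ by a local analysis at the three types of points where this quotient can have a pole. At a pole of $f$ of order $p$, $f^{(k)}$ has a pole of order $p+k$, so $f^{(k)}/f$ has a pole of order exactly $k$, contributing $k\,\ol N(r,\infty;f)$. At a zero of $f$ of order $m\geq k$, $f^{(k)}$ has a zero of order $m-k$, so $f^{(k)}/f$ again has a pole of order exactly $k$, contributing $k\,\ol N(r,0;f\mid\geq k)$. At a zero of $f$ of order $m<k$, a Taylor expansion shows that $f^{(k)}$ is generically nonzero there, so the pole of $f^{(k)}/f$ has order at most $m$, contributing at most $N(r,0;f\mid<k)$ counted with multiplicity.

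Combining these three local contributions with the inequality from the first step yields the asserted bound. The main subtlety is the bookkeeping in the case $m<k$: one must notice that the pole order of $f^{(k)}/f$ at such a point equals the multiplicity $m$ of the zero of $f$, which is exactly why $N(r,0;f\mid<k)$ appears with multiplicities, rather than as a reduced count weighted by $k$ as in the other two cases. Beyond this observation the argument is routine.
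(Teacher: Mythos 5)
The paper offers no proof of this lemma, quoting it from \cite{11}; your argument is precisely the standard one given there (bound $N(r,0;f^{(k)}\mid f\neq 0)$ by $N(r,\infty;f/f^{(k)})\leq T(r,f^{(k)}/f)+O(1)=N(r,\infty;f^{(k)}/f)+S(r,f)$ via the lemma on the logarithmic derivative, then analyse the poles of $f^{(k)}/f$ locally), and it is correct. Two wording slips do not affect validity but should be fixed: $N(r,\infty;f/f^{(k)})$ is in general only $\geq N(r,0;f^{(k)}\mid f\neq 0)$, not equal to it (a common zero of $f$ and $f^{(k)}$ at which $f^{(k)}$ vanishes to higher order also produces a pole of $f/f^{(k)}$), and at a zero of $f$ of multiplicity $m<k$ the pole order of $f^{(k)}/f$ is only at most $m$, not exactly $m$ --- both inequalities point in the direction your estimate requires.
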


\begin{lem}\label{l2.16} Let $F$, $G$ be given by (\ref{e2.1}), $F$, $G$ share $(1,k_{1})$, $2\leq k_{1}\leq \infty$ and $\Phi_{1}\not\equiv 0$  and $n\geq 3$. Also $f^{(k)}$,  $g^{(k)}$ share $(0,k_{3})$ and $f$, $g$ share $(\infty,\infty)$. Then  \beas \ol N(r,0;f^{(k)})\leq \frac{1}{k_{1}(n-2)-1}\;\;\ol N(r,\infty;f)+S(r,f^{(k)}).\eeas \end{lem}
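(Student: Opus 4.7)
The plan is to set up a two-sided squeeze on $\ol N(r,0;f^{(k)})$: first use $\Phi_1$ to bound it by $\ol N(r,1;F|\geq k_1+1)$, then use the logarithmic derivative $\phi=(f^{(k)})'/f^{(k)}$ to bound $\ol N(r,1;F|\geq k_1+1)$ in terms of $\ol N(r,0;f^{(k)})$ and $\ol N(r,\infty;f)$, and combine the two.

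For the first bound I analyse the poles and zeros of $\Phi_1$. Since $f,g$ share $(\infty,\infty)$, a pole of $f$ of multiplicity $p$ produces poles of both $F$ and $G$ of the same multiplicity $n(p+k)$, so the simple poles of $F'/(F-1)$ and $G'/(G-1)$ (with residues $-n(p+k)$ each) cancel in $\Phi_1$. Likewise, at common $1$-points of multiplicity $\leq k_1$ the residues agree by weight-$k_1$ sharing and cancel, so $N(r,\Phi_1)\leq \ol N(r,1;F|\geq k_1+1)$. On the zero side, by the $(0,k_3)$ sharing every zero of $f^{(k)}$ is a zero of $g^{(k)}$; if the multiplicities there are $m,m'$, the factor $(f^{(k)})^{n-1}$ in $F$ (resp.\ $(g^{(k)})^{n-1}$ in $G$) makes $F'/(F-1)$ vanish to order $(n-1)m-1$ (resp.\ $G'/(G-1)$ to order $(n-1)m'-1$), so $\Phi_1$ vanishes to order at least $(n-1)\min(m,m')-1\geq n-2$. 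Combined with $m(r,\Phi_1)=S(r,f^{(k)})+S(r,g^{(k)})$ from the logarithmic derivative lemma this gives
\[
(n-2)\,\ol N(r,0;f^{(k)})\leq \ol N(r,1;F|\geq k_1+1)+S(r,f^{(k)}).
\]

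The second bound, which is the heart of the argument, comes from $\phi=(f^{(k)})'/f^{(k)}$. At a $1$-point of $F$ of multiplicity $p$, the polynomial $(f^{(k)})^n+a(f^{(k)})^{n-1}+b$ vanishes to order $p$; since its roots $\omega_i$ are simple this forces $f^{(k)}-\omega_i$ itself to vanish to order $p$ for some $i$, so $(f^{(k)})'$ vanishes to order $p-1$ while $f^{(k)}(z_0)=\omega_i\ne 0$, and therefore $\phi$ vanishes to order $p-1$ at $z_0$. Summing over those $1$-points with $p\geq k_1+1$ yields $k_1\,\ol N(r,1;F|\geq k_1+1)\leq N(r,0;\phi)$. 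Moreover $\phi$ has simple poles precisely at the zeros of $f^{(k)}$ and at the poles of $f$, so $N(r,\phi)=\ol N(r,0;f^{(k)})+\ol N(r,\infty;f)$; together with $m(r,\phi)=S(r,f^{(k)})$ the first main theorem gives $N(r,0;\phi)\leq T(r,\phi)+O(1)=\ol N(r,0;f^{(k)})+\ol N(r,\infty;f)+S(r,f^{(k)})$. Thus
\[
k_1\,\ol N(r,1;F|\geq k_1+1)\leq \ol N(r,0;f^{(k)})+\ol N(r,\infty;f)+S(r,f^{(k)}).
\]

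Multiplying the first displayed inequality by $k_1$ and substituting the second yields $k_1(n-2)\,\ol N(r,0;f^{(k)})\leq \ol N(r,0;f^{(k)})+\ol N(r,\infty;f)+S(r,f^{(k)})$, and transposing the $\ol N(r,0;f^{(k)})$ term (which is legitimate since $k_1(n-2)-1>0$ by $k_1\geq 2$ and $n\geq 3$) yields the claimed bound. The delicate point in this plan is the bookkeeping of pole contributions: the $(\infty,\infty)$-sharing is precisely what keeps $N(r,\Phi_1)$ free of contributions from the poles of $F,G$, while the fact that the poles of $\phi$ are simple and located exactly at the zeros of $f^{(k)}$ and the poles of $f$ is what makes its defect relation produce $\ol N(r,0;f^{(k)})+\ol N(r,\infty;f)$ on the nose.
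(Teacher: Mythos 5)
Your proof is correct and follows essentially the same route as the paper: you squeeze $\ol N(r,0;f^{(k)})$ between the bound $(n-2)\,\ol N(r,0;f^{(k)})\leq \ol N(r,1;F|\geq k_{1}+1)+S(r)$ (which is the paper's Lemma 2.6 with $k_{3}=0$ and $\ol N_{*}(r,\infty;F,G)=0$, here re-derived from $\Phi_{1}$) and the estimate $k_{1}\,\ol N(r,1;F|\geq k_{1}+1)\leq \ol N(r,0;f^{(k)})+\ol N(r,\infty;f)+S(r)$ (which the paper obtains from $N(r,1;F)-\ol N(r,1;F)$ and Lemma 2.15, and you obtain equivalently from the logarithmic derivative $(f^{(k)})'/f^{(k)}$). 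The differences are only in packaging, and the final algebra is identical.
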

\begin {proof} Using {\it Lemma \ref{l2.4}} and {\it Lemma \ref{l2.15}} we see that \beas \ol N_{*}(r,1;F,G)&\leq&\ol N(r,1;F\mid\geq k_{1}+1)\\&\leq& \frac{1}{k_{1}}\;\left(N(r,1:F)-\ol N(r,1;F)\right)\\&\leq &\frac{1}{k_{1}}\;[\sum_{j=1}^{n} \left( N(r,\omega _{j};f^{(k)})-\ol N(r,\omega_{j};f^{(k)})\right)]\\&\leq& \frac{1}{k_{1}}\;\left(N(r,0;(f^{(k)})^{'}\mid f^{(k)} \not=0)\right)\\&\leq&\frac{1}{k_{1}}\left[\ol N(r,0;f^{(k)})+\ol N(r,\infty;f)\right]+S(r,f^{(k)}),\eeas where $\omega_{1},\omega_{2}\ldots \omega_{n}$ are the distinct roots of the equation $z^{n}+az^{n-1}+b=0$.
Rest of the proof follows from the {\it Lemma  \ref{l2.6}} for $k_{3}=0$.This proves the lemma.\end{proof}
\begin{lem}\label{l2.17} Let $F$, $G$ be given by (\ref{e2.1}), $F$, $G$ share $(1,k_{1})$, $2\leq k_{1}\leq \infty$ and $\Phi_{1}\not\equiv 0$  and $n\geq 3$. Also $f^{(k)}$,  $g^{(k)}$ share $(0,k_{3})$ and $f$,   $g$ share $(\infty,\infty)$, where $0\leq k_{3}\leq \infty$. Then  \beas \ol N_{L}(r,1;F)\leq \frac{k_{1}(n-2)}{(k_{1}+1)[k_{1}(n-2)-1]}\;\;\ol N(r,\infty;f)+S(r,f^{(k)}).\eeas Similar expression holds for $G$ also. \end{lem}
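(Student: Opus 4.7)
The plan is to combine a sharper multiplicity-counting argument for $\ol N_L(r,1;F)$ (analogous to the one already used in the proof of Lemma \ref{l2.16} for $\ol N_*(r,1;F,G)$) with the bound on $\ol N(r,0;f^{(k)})$ provided by Lemma \ref{l2.16} itself.

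First I would extract the right coefficient from the weighted-sharing hypothesis. At any point contributing to $\ol N_L(r,1;F)$ the $F$-multiplicity $p$ strictly exceeds the $G$-multiplicity $q$, and since $F,G$ share $(1,k_{1})$ with $k_{1}\geq 2$, neither multiplicity can be $\leq k_{1}$ (otherwise $p=q$). Hence $q\geq k_{1}+1$ and $p\geq k_{1}+2$, so each such point contributes at least $p-1\geq k_{1}+1$ to $N(r,1;F)-\ol N(r,1;F)$, giving
\[
(k_{1}+1)\,\ol N_{L}(r,1;F)\leq N(r,1;F)-\ol N(r,1;F).
\]

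Next I would factor $F-1$ through the roots $\omega_{1},\dots,\omega_{n}$ of $z^{n}+az^{n-1}+b=0$, so that the $1$-points of $F$ are exactly the $\omega_{j}$-points of $f^{(k)}$. This yields (exactly as in Lemma \ref{l2.16})
\[
N(r,1;F)-\ol N(r,1;F)\leq \sum_{j=1}^{n}\bigl(N(r,\omega_{j};f^{(k)})-\ol N(r,\omega_{j};f^{(k)})\bigr)\leq N\!\left(r,0;(f^{(k)})^{'}\,\big|\,f^{(k)}\neq 0\right),
\]
and then I would apply Lemma \ref{l2.15} to the function $f^{(k)}$ (with $k=1$ in that lemma) to get
\[
N\!\left(r,0;(f^{(k)})^{'}\,\big|\,f^{(k)}\neq 0\right)\leq \ol N(r,\infty;f)+\ol N(r,0;f^{(k)})+S(r,f^{(k)}),
\]
using that $\ol N(r,\infty;f^{(k)})=\ol N(r,\infty;f)$.

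Finally I would feed in Lemma \ref{l2.16}, which (under the exact same hypotheses of this lemma) gives
\[
\ol N(r,0;f^{(k)})\leq \frac{1}{k_{1}(n-2)-1}\,\ol N(r,\infty;f)+S(r,f^{(k)}).
\]
Substituting this into the previous chain yields
\[
(k_{1}+1)\,\ol N_{L}(r,1;F)\leq \left[1+\frac{1}{k_{1}(n-2)-1}\right]\ol N(r,\infty;f)+S(r,f^{(k)})=\frac{k_{1}(n-2)}{k_{1}(n-2)-1}\,\ol N(r,\infty;f)+S(r,f^{(k)}),
\]
and dividing by $k_{1}+1$ produces exactly the claimed inequality. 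The symmetric statement for $\ol N_{L}(r,1;G)$ follows by interchanging the roles of $f$ and $g$. The only genuinely delicate step is the first one — correctly identifying that weighted sharing $(1,k_{1})$ forces the multiplicities at an $L$-point to be at least $k_{1}+2$ and $k_{1}+1$ respectively, which is what sharpens the denominator from $k_{1}$ (as in Lemma \ref{l2.16}) to $k_{1}+1$; the rest is bookkeeping with Lemmas \ref{l2.15} and \ref{l2.16}.
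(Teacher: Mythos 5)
Your proposal is correct and follows essentially the same route as the paper: the paper likewise notes that $\ol N_{L}(r,1;F)\leq \ol N(r,1;F\mid\geq k_{1}+2)\leq \frac{1}{k_{1}+1}\left(N(r,1;F)-\ol N(r,1;F)\right)$, bounds the latter by $\ol N(r,0;f^{(k)})+\ol N(r,\infty;f)$ via the factorization through the roots $\omega_{j}$ together with Lemma \ref{l2.15}, and then invokes Lemma \ref{l2.16}. The coefficient arithmetic in your final step is also exactly what the paper leaves to the reader.
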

\begin {proof} Using {\it Lemma \ref{l2.4}} and {\it Lemma \ref{l2.15}} we see that \beas \ol N_{L}(r,1;F)&\leq&\ol N(r,1;F\mid\geq k_{1}+2)\\&\leq& \frac{1}{k_{1}+1}\;\left(N(r,1:F)-\ol N(r,1;F)\right)\\&\leq&\frac{1}{k_{1}+1}\left[\ol N(r,0;f^{(k)})+\ol N(r,\infty;f)\right]+S(r,f^{(k)}).\eeas
Now using {\it Lemma \ref{l2.16}} the rest of the lemma can be easily proved. So we omit it.\end{proof}
\begin{lem}\label{l2.18}\cite{1} Let $f$ and $g$ be two non-constant meromorphic functions sharing $(1,k_{1})$, where $2\leq k_{1}\leq \infty$. Then\beas \ol N(r,1;f|= 2)+2\;\ol N(r,1;f|=3)+\ldots+(k_{1}-1)\;\ol N(r,1;f|=k_{1})+k_{1}\;\ol N_{L}(r,1;f)\\+(k_{1}+1)\;\ol N_{L}(r,1;g)+k_{1}\;\ol N_{E}^{(k_{1}+1}(r,1;g)\leq N(r,1;g)-\ol N(r,1;g).\eeas \end{lem}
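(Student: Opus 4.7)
The plan is to establish this inequality by a direct combinatorial case analysis on individual $1$-points, because both sides decompose as sums of local contributions coming from the $1$-points of $g$. First I would fix an arbitrary $1$-point $z_0$ of $g$ of multiplicity $q$, and let $p$ denote its multiplicity as a $1$-point of $f$. The weighted sharing hypothesis $f,g$ share $(1,k_1)$ (Definition~\ref{d1.1}) forces exactly one of the two possibilities $p=q\le k_1$ or else $p,q\ge k_1+1$; in particular every $1$-point of $g$ is a $1$-point of $f$. The contribution of $z_0$ to the right-hand side $N(r,1;g)-\ol N(r,1;g)$ is precisely $q-1$.

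Next I would compute the contribution of $z_0$ to the left-hand side in four mutually exclusive sub-cases, so as to dominate it by $q-1$ point by point. If $2\le q\le k_1$, then $z_0$ is counted once in $\ol N(r,1;f|=q)$, which carries coefficient $q-1$ in the sum, producing an exact match. If $q\ge k_1+1$ and $p=q$, then $z_0$ is counted in $\ol N_E^{(k_1+1}(r,1;g)$ with coefficient $k_1$, and indeed $k_1\le q-1$. If $q\ge k_1+1$ and $p>q$ (so in particular $p\ge k_1+2$ and $q\ge k_1+1$), then $z_0$ lies in $\ol N_L(r,1;f)$ with coefficient $k_1\le q-1$. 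Finally, if $q\ge k_1+1$ and $p<q$ (so that $q\ge k_1+2$), then $z_0$ contributes to $\ol N_L(r,1;g)$ with coefficient $k_1+1\le q-1$. Simple $1$-points ($q=1$, forcing $p=1$) contribute zero to either side.

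Summing the pointwise estimate ``contribution to LHS $\le q-1$'' over all $1$-points of $g$ will yield the asserted global inequality. The only delicate bookkeeping is to verify that the four buckets above are mutually exclusive and collectively exhaust every $1$-point of $g$ of multiplicity at least two; this is immediate from the trichotomy $p<q$, $p=q$, $p>q$ together with the weighted sharing dichotomy recalled in the first paragraph. I do not expect any serious obstacle: no Nevanlinna-theoretic machinery is required beyond the counting-function definitions themselves, so the proof reduces entirely to this careful enumeration of local multiplicities.
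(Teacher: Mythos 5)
Your argument is correct and complete: the dichotomy forced by weighted sharing ($p=q\le k_1$ or both $p,q\ge k_1+1$) makes your four buckets disjoint and exhaustive over the $1$-points of $g$ of multiplicity $\ge 2$, and in each bucket the local LHS contribution is dominated by $q-1$, which is exactly the local contribution to $N(r,1;g)-\ol N(r,1;g)$. The paper itself gives no proof of this lemma (it is imported from the cited reference), and your pointwise multiplicity bookkeeping is precisely the standard argument by which it is established there.
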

\begin{lem}\label{l2.19} Let $F$, $G$ be given by (\ref{e2.1}) and they share $(1,k_{1})$. If $f^{(k)}$, $g^{(k)}$ share $(0,k_{3})$ and $f$,  $g$ share  $(\infty,k_{2})$, where $2\leq k_{1}\leq \infty$ and $H\not\equiv 0$.
\beas  nT(r,f^{(k)})&\leq& \ol N(r,\infty;f)+\ol N(r,-a\frac{n-1}{n};f^{(k)})+\ol N(r,\infty;g)+\ol N(r,-a\frac{n-1}{n};g^{(k)})\\ & &+\ol N(r,0;f^{(k)})+\ol N(r,0;g^{(k)})+\ol N_{*}(r,0;f^{(k)},g^{(k)})+\ol N_{*}(r,\infty;f,g)\nonumber\\ & &-(k_{1}-1)\ol N_{*}(r,1;F,G)+\ol N_{L}(r,1;F)+S(r,f^{(k)})+S(r,g^{(k)}).\eeas Similar result holds for $g^{(k)}$.\end{lem}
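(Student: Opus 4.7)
The plan is to apply the Second Fundamental Theorem (SFT) directly to $f^{(k)}$ at the $n+2$ distinct values $\infty, 0, \omega_1, \ldots, \omega_n$, where $\omega_1,\ldots,\omega_n$ are the (distinct, nonzero) roots of $z^n+az^{n-1}+b=0$, and then combine the output with Lemmas \ref{l2.1}, \ref{l2.2}, \ref{l2.15}, and \ref{l2.18}. Since $\sum_{j=1}^n \ol N(r,\omega_j;f^{(k)}) = \ol N(r,1;F)$ and $\ol N(r,\infty;f^{(k)}) = \ol N(r,\infty;f)$, SFT yields
$$nT(r,f^{(k)}) \le \ol N(r,\infty;f) + \ol N(r,0;f^{(k)}) + \ol N(r,1;F) - N_0(r,0;(f^{(k)})') + S(r,f^{(k)}),$$
where $N_0$ denotes the non-reduced counting function of zeros of $(f^{(k)})'$ not lying over the listed values. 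This choice of values is deliberate: a direct SFT applied to $F$ would introduce the term $\ol N(r,-a;f^{(k)})$, which is absent from the target conclusion and cannot be reabsorbed.

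Next I would split $\ol N(r,1;F) \le \ol N(r,1;F|=1) + \ol N(r,1;F|\geq 2)$. Lemma \ref{l2.1} bounds the simple part by $N(r,H)+S$, and Lemma \ref{l2.2} expands $N(r,H)$ into its standard six terms, including the ramification pieces $\ol N_0(r,0;(f^{(k)})')+\ol N_0(r,0;(g^{(k)})')$. For the multiple part I would write
$$\ol N(r,1;F|\geq 2) = \sum_{m=2}^{k_1}\ol N(r,1;F|=m) + \ol N_L(r,1;F) + \ol N_L(r,1;G) + \ol N_E^{(k_1+1}(r,1;F),$$
feed Lemma \ref{l2.18} into the coefficient-$(m-1)$ sum, exploit the sharing identity $\ol N_E^{(k_1+1}(r,1;F)=\ol N_E^{(k_1+1}(r,1;G)$, drop the nonpositive $-(k_1-1)\ol N_E^{(k_1+1}$ term, and rewrite $-(k_1-1)\ol N_L(r,1;F) - k_1\ol N_L(r,1;G) = -k_1\ol N_*(r,1;F,G) + \ol N_L(r,1;F)$. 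The outcome is
$$\ol N(r,1;F|\geq 2) \le N(r,1;G) - \ol N(r,1;G) - k_1\ol N_*(r,1;F,G) + \ol N_L(r,1;F) + S(r).$$
The coefficient $-k_1$ is precisely what, combined with the $+\ol N_*(r,1;F,G)$ donated by Lemma \ref{l2.2}, collapses to the target $-(k_1-1)\ol N_*(r,1;F,G)$ of the conclusion.

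The main accounting obstacle is to dispose of the two surplus quantities $\ol N_0(r,0;(g^{(k)})')$ (from Lemma \ref{l2.2}) and $N(r,1;G) - \ol N(r,1;G)$ (from the Lemma \ref{l2.18} step), neither of which appears on the right-hand side of the target inequality. The key observation is that, as counts on the zero set of $(g^{(k)})'$, these two quantities have disjoint support: $\ol N_0$ records (reduced) zeros of $(g^{(k)})'$ lying over $g^{(k)}\notin\{0,\omega_1,\ldots,\omega_n\}$, while $N(r,1;G) - \ol N(r,1;G)$ records the (non-reduced) excess multiplicity of $(g^{(k)})'$ precisely at the $\omega_j$-preimages of $g^{(k)}$. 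Hence their sum is majorized by $N(r,0;(g^{(k)})'\vline g^{(k)}\ne 0)$, which Lemma \ref{l2.15} (used with its local parameter equal to $1$, applied to the function $g^{(k)}$) controls by $\ol N(r,\infty;g) + \ol N(r,0;g^{(k)}) + S(r,g^{(k)})$, supplying exactly the two missing $g$-terms. The leftover $\ol N_0(r,0;(f^{(k)})')$ is absorbed by $-N_0(r,0;(f^{(k)})')$ from the SFT step, since both exclude the same preimage set and the non-reduced count dominates the reduced one. Assembling all the pieces reproduces the inequality of the lemma.
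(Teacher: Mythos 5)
Your proposal is correct and follows essentially the same route as the paper: the second fundamental theorem applied to $f^{(k)}$ at $0,\infty,\omega_{1},\ldots,\omega_{n}$, Lemmas \ref{l2.1} and \ref{l2.2} for the simple $1$-points of $F$, Lemma \ref{l2.18} for the multiple ones, and the absorption of $\ol N_{0}(r,0;(g^{(k)})^{'})$ together with $N(r,1;G)-\ol N(r,1;G)$ into $N(r,0;(g^{(k)})^{'}\vline\; g^{(k)}\neq 0)$ via Lemma \ref{l2.15}, which is exactly the paper's inequality (2.2). The bookkeeping of the coefficients of $\ol N_{L}(r,1;F)$, $\ol N_{L}(r,1;G)$ and $\ol N_{*}(r,1;F,G)$ checks out, so nothing further is needed.
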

\begin{proof} Using {\it Lemma  \ref{l2.15}} and {\it Lemma \ref{l2.18}} we see that
\bea \label{e2.2}& & \ol N_{0}(r,0;(g^{(k)})^{'})+\ol N(r,1;F\mid\geq 2)+\ol N_{*}(r,1;F,G)\\&\leq& \ol N_{0}(r,0;(g^{(k)})^{'})+\ol N(r,1;F|=2)+\ol N(r,1;F|=3)+\ldots +\ol N(r,1;F|=k_{1})\nonumber\\ & &+\ol N_{E}^{(k_{1}+1}(r,1;F)+\ol N_{L}(r,1;F)+\ol N_{L}(r,1;G)+\ol N_{*}(r,1;F,G)\nonumber\\&\leq& \ol N_{0}(r,0;(g^{(k)})^{'})-\ol N(r,1;F|=3)-\ldots -(k_{1}-2)\ol N(r,1;F|=k_{1})-(k_{1}-1)\ol N_{L}(r,1;F)\nonumber\\ & &-k_{1}\ol N_{L}(r,1;G)-(k_{1}-1)\ol N_{E}^{(k_{1}+1}(r,1;F)+N(r,1;G)-\ol N(r,1;G)+\ol N_{*}(r,1;F,G)\nonumber\\&\leq& \ol N_{0}(r,0;(g^{(k)})^{'})+N(r,1;G)-\ol N(r,1;G)-(k_{1}-2)\ol N_{L}(r,1;F)-(k_{1}-1)\ol N_{L}(r,1;G)\nonumber\\&\leq&\ol N_{0}(r,0;(g^{(k)})^{'})+N(r,1;G)-\ol N(r,1;G)-(k_{1}-2)\ol N_{L}(r,1;F)-(k_{1}-1)\ol N_{L}(r,1;G)\nonumber\\&\leq& N(r,0;(g^{(k)})^{'}\mid g^{(k)}\not=0)-(k_{1}-2)\ol N_{L}(r,1;F)-(k_{1}-1)\ol N_{L}(r,1;G)\nonumber\\&\leq& \ol N(r,0;g^{(k)})+\ol N(r,\infty;g)-(k_{1}-2)\ol N_{L}(r,1;F)-(k_{1}-1)\ol N_{L}(r,1;G)\nonumber\\&=& \ol N(r,0;g^{(k)})+\ol N(r,\infty;g)-(k_{1}-1)\ol N_{*}(r,1;F,G)+\ol N_{L}(r,1;F),\nonumber \eea
where $\ol N_{0}(r,0;(g^{(k)})^{'})$ has the same meaning as in the {\it Lemma  \ref{l2.2}}. Hence using (2.2), {\it Lemmas \ref{l2.1}}, {\it \ref{l2.2}} and {\it \ref{l2.4}} we get from second fundamental theorem that
\bea \label{e2.3}& & n\;T(r,f^{(k)})\\ &\leq & \ol N(r,0;f^{(k)})+\ol N(r,\infty;f)+ N(r,1;F\mid=1)+\ol N(r,1;F\mid\geq 2)-N_{0}(r,0;(f^{(k)})^{'})\nonumber\\&&+S(r,f^{(k)})\nonumber\\&\leq & \ol N(r,0;f^{(k)})+\ol N(r,\infty;f)+\ol N(r,-a\frac{n-1}{n};f^{(k)})+\ol N(r,-a\frac{n-1}{n};g^{(k)})\nonumber\\ & &+\ol N_{*}(r,0;f^{(k)},g^{(k)})+\ol N_{*}(r,\infty;f,g)+ \ol N_{*}(r,1;F,G)+\ol N(r,1;F|\geq 2)+\ol N_{0}(r,0;(g^{(k)})^{'})\nonumber\\ & &+S(r,f^{(k)})+S(r,g^{(k)})\nonumber \\ &\leq & \ol N(r,\infty;f)+\ol N(r,-a\frac{n-1}{n};f^{(k)})+\ol N(r,\infty;g)+\ol N(r,-a\frac{n-1}{n};g^{(k)})+\ol N(r,0;f^{(k)})\nonumber\\& & +\ol N(r,0;g^{(k)})+\ol N_{*}(r,\infty;f,g)+\ol N_{*}(r,0;f^{(k)},g^{(k)})-(k_{1}-1)\ol N_{*}(r,1;F,G)+\ol N_{L}(r,1;F)\nonumber\\ & &+S(r,f^{(k)})+S(r,g^{(k)}).\nonumber\eea\par
This proves the Lemma.\end{proof}
\begin{lem}\label{l2.24}\cite{1aa} Let $F$, $G$ be given by (\ref{e2.1}), $n\geq 3$ and they share $(1,k_{1})$. If $f^{(k)}$, $g^{(k)}$ share $(0,0)$, and $f$, $g$ share $(\infty,k_{2})$ and $H\equiv 0$. Then $f^{(k)}\equiv g^{(k)}$. \end{lem}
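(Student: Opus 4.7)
The plan is to treat $H\equiv 0$ as a differential equation forcing a M\"obius identification of $F$ and $G$, and then rule out every configuration except $F\equiv G$. Since $H=\bigl(\log(F'/(F-1)^{2})\bigr)'-\bigl(\log(G'/(G-1)^{2})\bigr)'$, the hypothesis gives $F'/(F-1)^{2}\equiv A\,G'/(G-1)^{2}$ with a nonzero constant $A$, and a second integration yields
$$\frac{1}{F-1}=\frac{A}{G-1}+B$$
for constants $A\neq 0$ and $B$. Thus $F$ is a M\"obius transform of $G$, and the argument splits into four subcases: (i) $B=0$, $A=1$; (ii) $B=0$, $A\neq 1$; (iii) $B\neq 0$, $A=B$; (iv) $B\neq 0$, $A\neq B$.

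Subcase (i) gives $F\equiv G$, that is $(f^{(k)})^{n-1}(f^{(k)}+a)\equiv(g^{(k)})^{n-1}(g^{(k)}+a)$, which factors as
$$\bigl(f^{(k)}-g^{(k)}\bigr)\,Q\bigl(f^{(k)},g^{(k)}\bigr)\equiv 0$$
with $Q(u,v)=u^{n-1}+u^{n-2}v+\cdots+v^{n-1}+a(u^{n-2}+u^{n-3}v+\cdots+v^{n-2})$. I would rule out $Q(f^{(k)},g^{(k)})\equiv 0$ by setting $h=f^{(k)}/g^{(k)}$: the sharing hypotheses ``$f^{(k)},g^{(k)}$ share $(0,0)$'' and ``$f,g$ share $(\infty,k_{2})$'' keep $h$ away from creating extra zeros or poles, and applying Lemma~\ref{l2.4} to the resulting polynomial identity in $h$ and $g^{(k)}$ forces $h$ to be a nonzero constant with $h^{n}=h^{n-1}=1$, whence $h\equiv 1$ and $f^{(k)}\equiv g^{(k)}$. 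In each of the three remaining subcases the M\"obius relation identifies poles of $F$ (equivalently poles of $f^{(k)}$) with zeros of $G$, with $G$-preimages of some specific $c\neq 0$, or with poles of $G$. Combined with the shared pole structure of $f$ and $g$, this turns $0$, $-a$, or an appropriate root of $z^{n}+az^{n-1}+b=0$ into a near-Picard exceptional value for $g^{(k)}$. Applying Nevanlinna's second main theorem to $g^{(k)}$ (or to $G$) with three such distinct targets and using Lemma~\ref{l2.4} to replace $T(r,G)$ by $n\,T(r,g^{(k)})+S(r,g^{(k)})$, the hypothesis $n\geq 3$ produces a numerical contradiction.

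The main obstacle is the delicate bookkeeping in the three bad subcases. Each M\"obius configuration pairs value sets of $f^{(k)}$ and $g^{(k)}$ differently, and one has to absorb the weighted-sharing defect terms $\ol N_{*}(r,0;f^{(k)},g^{(k)})$ and $\ol N_{*}(r,\infty;f,g)$ into the second-main-theorem inequality without consuming the slack provided by $n\geq 3$. Once each bad M\"obius type is eliminated, only subcase (i) survives, and the factorization argument above closes the proof.
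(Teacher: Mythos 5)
The paper itself does not prove this lemma (it is imported from \cite{1aa}), and your overall route coincides with the standard one used there: integrate $H\equiv 0$ twice to obtain $\frac{1}{F-1}=\frac{A}{G-1}+B$, split into the four M\"obius configurations, kill the three degenerate ones with the second main theorem plus Lemma \ref{l2.4}, and reduce $F\equiv G$ to $f^{(k)}\equiv g^{(k)}$ via $h=f^{(k)}/g^{(k)}$. So the strategy is the right one.

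The step that would fail as written is the finish of subcase (i). From $Q(f^{(k)},g^{(k)})\equiv 0$ you get $g^{(k)}(h^{n}-1)\equiv -a(h^{n-1}-1)$, and Lemma \ref{l2.4} only converts this into $T(r,g^{(k)})=nT(r,h)+S(r,h)$; it cannot by itself force $h$ to be constant. Ruling out non-constant $h$ requires the second main theorem applied to $h$ at the $n+1$ targets $0$, $\infty$ and the $n-1$ roots $u_{j}\neq 1$ of $u^{n}=1$, using two facts you only allude to: first, $h$ omits $0$ and $\infty$ (at an $h=0$ point the identity gives $f^{(k)}=0$ while $g^{(k)}=-a\neq 0$, and at an $h=\infty$ point $g^{(k)}=0$ while $f^{(k)}=-a\neq 0$, both violating the sharing of $0$); second, every $u_{j}$-point of $h$ is a pole of $g^{(k)}$, hence of $g$, hence has multiplicity at least $k+1$, so $\ol N(r,u_{j};h)\leq\frac{1}{k+1}T(r,h)$. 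Only with all of this does one get $(n-1)T(r,h)\leq\frac{n-1}{k+1}T(r,h)+S(r,h)$, a contradiction for $k\geq 1$; counting the $u_{j}$ alone gives $(n-3)T(r,h)\leq\frac{n-1}{k+1}T(r,h)+S(r,h)$, which is vacuous precisely at the critical value $n=3$. The three remaining M\"obius subcases are likewise only asserted: in each you must identify explicitly which value of $g^{(k)}$ (among $0$, $-a$, and the roots of $z^{n}+az^{n-1}+b$) becomes deficient and run the SMT count with Lemma \ref{l2.4}; this is routine, but it is exactly where $n\geq 3$ and the pole-sharing hypothesis are consumed, so it cannot be waved through.
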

\section {Proofs of the theorem}
\begin{proof} [Proof of Theorem \ref{t1.1}]  Let $F$, $G$ be given by (\ref{e2.1}). Then $F$ and $G$ share $(1,k_{1})$, $(\infty;k_{2})$. We consider the following cases.\\
{\bf Case 1.} Let $H\not\equiv 0$. Clearly $F\not\equiv G$ and so $f^{(k)}\not\equiv g^{(k)}$.\\
{\bf Subcase 1.1:} Let $\Phi_{2}\not\equiv 0$.\\
{\bf Subcase 1.1.1:} Suppose $\Phi_{3}\not\equiv 0$.\\
First suppose $0$ is not an e.v.P. of $f^{(k)}$ and $g^{(k)}$. Then by {\it Lemma \ref{l2.5}} we get $\Phi_{1}\not\equiv 0$.
Since $f^{(k)}$ and $g^{(k)}$ share $(0,k_{3})$ it follows that $\ol N_{*}(r,0;f^{(k)},g^{(k)})\leq \ol N(r,0;f^{(k)})$. Now successively using {\it Lemmas \it  \ref{l2.19}}, {\it \ref{l2.9}} for $k_{3}=0$, {\it \ref{l2.11}} for $k_{2}=0$ and {\it \ref{l2.14}} we obtain
\bea \label{e3.1} & & \;\;nT(r,f^{(k)})\\&\leq& \ol N(r,\infty;f)+\ol N(r,-a\frac{n-1}{n};f^{(k)})+\ol N(r,\infty;g)+\ol N(r,-a\frac{n-1}{n};g^{(k)})\nonumber\\& & +2\;\ol N(r,0;f^{(k)})+\ol N_{*}(r,0;f^{(k)},g^{(k)})+\ol N_{*}(r,\infty;f,g)-(k_{1}-1)\ol N_{*}(r,1;F,G)\nonumber\\& &+\ol N_{L}(r,1;F)+S(r,f^{(k)})+S(r,g^{(k)})\nonumber\\&\leq& \ol N(r,-a\frac{n-1}{n};f^{(k)})+\ol N(r,-a\frac{n-1}{n};g^{(k)})+3\;\;\ol N(r,0;f^{(k)})+2\ol N(r,\infty;f)\nonumber\\& &+\ol N_{*}(r,\infty;f,g)+S(r,f^{(k)})+S(r,g^{(k)})\nonumber\\&\leq& \ol N(r,-a\frac{n-1}{n};f^{(k)})+\ol N(r,-a\frac{n-1}{n};g^{(k)})+\frac{3k_{1}+3}{(n-2)k_{1}-1}\;\;\ol N(r,\infty;f|\geq k_{2}+1)\nonumber\\ & &+\ol N(r,\infty;f|\geq k_{2}+1) +\frac{2k_{1}+2}{k_{1}k-1}\;\;\;\ol N(r,0;f^{(k)}|\geq k_{3}+1)+S(r,f^{(k)})+S(r,g^{(k)})\nonumber\\&\leq & T(r,f^{(k)})+T(r,g^{(k)})+S(r,f^{(k)})+S(r,g^{(k)})\nonumber\\&\leq & 2\;\;T(r)+S(r).\nonumber\eea
Next suppose $0$ is an e.v.P. of $f^{(k)}$ and $g^{(k)}$. Then $\ol N(r,0;f^{(k)})=S(r,f^{(k)})$.\par Suppose that $\Phi_{1}\not\equiv 0$. Then by {\it Lemma \ref{l2.11}} for $k_{2}=0$ we get $\ol N(r,\infty;f)=S(r)$. So $\ol N_{*}(r,\infty;f,g)=S(r)$.  Consequently (\ref{e3.1}) holds.\par
Next assume $\Phi_{1}\equiv 0$. Then $(F-1)\equiv d(G-1)$, where $d\not=0,1$. Since $f$ and $g$ share $(\infty,k_{2})$, it follows that $f$, $g$ share $(\infty,\infty)$ which implies $\ol N_{*}(r,\infty;f,g)=S(r)$. Also by {\it Lemma \ref{l2.11}} for $k_{2}=0$ we get $\ol N(r,\infty;f)=S(r)$. Clearly in this case also (\ref{e3.1}) holds.\par
In a similar manner as above we can obtain \bea\label{e3.2} nT(r,g^{(k)}) &\leq& 2\;\;T(r)+S(r).\eea
Combining (\ref{e3.1}) and (\ref{e3.2}) we get
\bea\label{e3.3}& &\left(n-2\right)T(r)\leq S(r),\eea
 which leads to a contradiction for $n\geq 3$.\\
{\bf Subcase 1.1.2:} Suppose $\Phi_{3}\equiv 0$. Then by integration we obtain \beas 1-\frac{\omega_{i}}{f^{(k)}}\equiv A(1-\frac{\omega_{j}}{g^{(k)}}),\eeas where $A\not=0$. If $A=1$ then $f^{(k)}=\frac{\omega _{i}}{\omega_{j}} g^{(k)}$, which contradicts $\Phi_{2}\not\equiv 0$. So $A\not=0,1$. Since $f$ and $g$ share $(\infty,k_{3})$, it follows that $N(r,\infty;f)=S(r,f^{(k)})$ and $N(r,\infty;g)=S(r,g^{(k)})$. Now proceeding in the same way as done in the {\it Subcase 1.1.1} we can arrive at a contradiction.\\
{\bf Subcase 1.2:}  Let $\Phi_{2}\equiv 0$.\\
On integration we have $f^{(k)}\equiv cg^{(k)}$, where $c\not=0,1$. Since $f^{(k)}$ and $g^{(k)}$ share $(0,k_{3})$ and $f$, $g$ share  $(\infty,k_{2})$, it follows that $\ol N_{*}(r,0;f^{(k)},g^{(k)})=0$ and $\ol N_{*}(r,\infty;f,g)=0$.\\
{\bf Subcase 1.2.1} Suppose $\Phi_{3}\not\equiv 0$.\\
If $0$ is not an e.v.P. of $f^{(k)}$ and $g^{(k)}$ then by {\it Lemma \ref{l2.5}} we get $\Phi_{1}\not\equiv 0$. Now consecutively using {\it Lemmas {\it \ref{l2.19}}, \ref{l2.16}, \ref{l2.10}} for $k_{2}=0$, and {\it \ref{l2.17}} we obtain
\bea \label{e3.4}& &\;\; nT(r,f^{(k)}) \\&\leq& \ol N(r,\infty;f)+\ol N(r,-a\frac{n-1}{n};f^{(k)})+\ol N(r,\infty;g)+\ol N(r,-a\frac{n-1}{n};g^{(k)})\nonumber\\& & +2\ol N(r,0;f^{(k)})+\ol N_{*}(r,0;f^{(k)},g^{(k)})+\ol N_{*}(r,\infty;f,g)-(k_{1}-1)\;\;\ol N_{*}(r,1;F,G)\nonumber\\ & &+\ol N_{L}(r,1;F) +S(r,f^{(k)})+S(r,g^{(k)})\nonumber\\&\leq& \ol N(r,-a\frac{n-1}{n};f^{(k)})+\ol N(r,-a\frac{n-1}{n};g^{(k)})+2\;\ol N(r,\infty;f)+\frac{2}{k_{1}(n-2)-1}\;\;\ol N(r,\infty;f)\nonumber\\ & & -(k_{1}-1)\;\;\ol N_{*}(r,1;F,G)+\ol N_{L}(r,1;F)+S(r,f^{(k)})+S(r,g^{(k)})\nonumber\\
 &\leq& \ol N(r,-a\frac{n-1}{n};f^{(k)})+\ol N(r,-a\frac{n-1}{n};g^{(k)})+3\;\;\ol N(r,\infty;f)-(k_{1}-1)\ol N_{*}(r,1;F,G)\nonumber\\ & &+\ol N_{L}(r,1;F)+S(r,f^{(k)})+S(r,g^{(k)})\nonumber\\&\leq & 2\;T(r)+\frac{3}{k}\;\;\ol N_{*}(r,1;F,G)-(k_{1}-1)\ol N_{*}(r,1;F,G)+\ol N_{L}(r,1;F)+S(r,f^{(k)})+S(r,g^{(k)})\nonumber\\&\leq & 2\;T(r) +\frac{k_{1}(n-2)}{(k_{1}+1)[k_{1}(n-2)-1]}\;\;\ol N(r,\infty;g)+S(r,f^{(k)})+S(r,g^{(k)})\nonumber\\&\leq & \left(2+\frac{3k_{1}(n-2)}{(k_{1}+1)(k+1)[k_{1}(n-2)-1]}\right)T(r)+S(r).\nonumber\eea
That is
\bea\label{e3.5}& &\left(n-2-\frac{3k_{1}(n-2)}{(k_{1}+1)(k+1)[k_{1}(n-2)-1]}\right)T(r)\leq S(r).\eea
Since $n\geq 3$, (\ref{e3.5}) leads to a contradiction.\par
Suppose $0$ is an e.v.P. of $f^{(k)}$ and $g^{(k)}$. Then {\it Lemma \ref{l2.10}} for $k_{2}=0$ we get $\ol N(r,\infty;f)=\frac{1}{k} \ol N_{*}(r,1;F,G)$. Proceeding as above in this case also we arrive at a contradiction.\\
{\bf Subcase 1.2.2:}  Suppose $\Phi_{3}\equiv 0$.\\   Suppose $\infty$ is not an e.v.P. of $f$ and $g$. Since $f^{(k)}$ and $g^{(k)}$ share $(0,k_{3})$ and $f$,  $g$ share $(\infty,k_{2})$, from {\it Lemma \ref{l2.9a}} it follows that $\ol N_{*}(r,0;f^{(k)},g^{(k)})=0$ and $\ol N_{*}(r,\infty;f,g)=0$.\par
 Suppose $0$ is not an e.v.P of $f^{(k)}$ and $g^{(k)}$ then by {\it Lemma \ref{l2.5}} we get $\Phi_{1}\not\equiv 0$.  Now consecutively using {\it Lemmas \it \ref{l2.19}, \ref{l2.6}} for $k_{3}=0$, {\it \ref{l2.12aa}} for $k_{2}=0$ we obtain
\bea \label{e3.6}& &\;\; nT(r,f^{(k)}) \\&\leq& \ol N(r,\infty;f)+\ol N(r,-a\frac{n-1}{n};f^{(k)})+\ol N(r,\infty;g)+\ol N(r,-a\frac{n-1}{n};g^{(k)})\nonumber\\& & +2\ol N(r,0;f^{(k)})+\ol N_{*}(r,0;f^{(k)},g^{(k)})+\ol N_{*}(r,\infty;f,g)-(k_{1}-1)\;\;\ol N_{*}(r,1;F,G)\nonumber\\ & &+\ol N_{L}(r,1;F) +S(r,f^{(k)})+S(r,g^{(k)})\nonumber\\&\leq& \ol N(r,-a\frac{n-1}{n};f^{(k)})+\ol N(r,-a\frac{n-1}{n};g^{(k)})+2\;\;\ol N(r,\infty;f)+2\;\ol N_{*}(r,1;F,G)\nonumber\\ & & -(k_{1}-1)\;\;\ol N_{*}(r,1;F,G)+\ol N_{L}(r,1;F)+S(r,f^{(k)})+S(r,g^{(k)})\nonumber\\&\leq& \ol N(r,-a\frac{n-1}{n};f^{(k)})+\ol N(r,-a\frac{n-1}{n};g^{(k)})-(k_{1}-3)\ol N_{*}(r,1;F,G)+\ol N_{L}(r,1;F)\nonumber\\ & &+\frac{2}{nk+n-1}\;\{\ol N(r,0;f^{(k)}+a)+\ol N(r,0;g^{(k)}+a)+\ol N_{*}(r,1;F,G)\}+S(r,f^{(k)})+S(r,g^{(k)})\nonumber\\&\leq & 2\;T(r)+\frac{4}{nk+n-1}\;T(r)+ \frac{2}{5}\;\ol N_{L}(r,1;F)+S(r,f^{(k)})+S(r,g^{(k)})\nonumber\\&\leq & \left(2+\frac{4}{nk+n-1}+\frac{2k_{1}(n-2)}{5(k_{1}+1)[k_{1}(n-2)-1]}\right)T(r)+S(r).\nonumber\eea
That is
\bea\label{e3.7}& &\left(n-2-\frac{4}{nk+n-1}-\frac{2k_{1}(n-2)}{5(k_{1}+1)[k_{1}(n-2)-1]}\right)T(r)\leq S(r).\eea
Since $n\geq 3$, (\ref{e3.7}) leads to a contradiction.\par
If $0$ is an e.v.P. of $f^{(k)}$ and $g^{(k)}$ then with the help of {\it Lemmas \ref{l2.19}} and \ref{l2.12aa} for $k_{2}=0$ and proceeding as above we arrive at a contradiction.\\
If $\infty$ is an e.v.P. of $f$ and $g$ then proceeding as in th {\it Subcase 1.2.1} we can arrive at a contradiction.
{\bf Case 2.} Let $H\equiv 0$. Then the theorem follows from {\it Lemma  \ref{l2.24}}.\end{proof}
{

\end{document}